\def\BBox{\kern  -0.2cm\hbox{\vrule width 0.2cm height 0.2cm}}
\newcommand{\hf}{\hspace*{0mm}\hspace{\fill}\hspace*{0mm}}
\newtheorem{theorem}{Theorem}[section]
\newtheorem{lemma}[theorem]{Lemma}
\newtheorem{corollary}[theorem]{Corollary}
\newtheorem{proposition}[theorem]{Proposition}
\newtheorem{observation}[theorem]{Observation}
\newcommand\size[1] {\left|{#1}\right|}
\newcommand\Setx[1] {\left\{{#1}\right\}}
\newcommand\patt[5] {\text{\small #1\ #2\ #3\ #4\ #5}}
\newcommand\alphabet {\{\text{\small A,\,B,\,C,\,D}\}}
\title{\textbf{Treelike snarks}}
\author{Mari\'{e}n Abreu$^{1}$\thanks{The research that led to the present paper was partially supported by a grant of the group GNSAGA of INdAM and by the Italian Ministry Research Project PRIN 2012 ``Geometric Structures, Combinatorics and their Applications''.}
, Tom\'{a}\v{s} Kaiser$^{2}$\thanks{Supported by project GA14-19503S of the Czech Science Foundation.
\newline {\em
Email addresses:} marien.abreu@unibas.it (M. Abreu),~
kaisert@kma.zcu.cz (T. Kaiser), ~ domenico.labbate@unibas.it (D. Labbate), ~ giuseppe.mazzuoccolo@univr.it (G. Mazzuoccolo)}
,\\
Domenico Labbate$^{1}$$^*$, Giuseppe Mazzuoccolo$^{3}$$^*$
\\[2ex]
\footnotesize $^1$ Dipartimento di Matematica, Universit\`{a} degli Studi della Basilicata,\\[-3mm]
\footnotesize Viale dell'Ateneo Lucano, I-85100 Potenza, Italy.\\
\footnotesize $^2$ Department of Mathematics, Institute for Theoretical Computer Science\\[-3mm]
\footnotesize and the European Centre of Excellence NTIS (New Technologies for the Information Society),\\[-3mm]
\footnotesize University of West Bohemia, Univerzitn\'{\i} 8, 306 14 Plze\v{n}, Czech Republic.\\
\footnotesize $^3$ Dipartimento di Informatica,\\[-3mm]
\footnotesize Universit\`{a} degli Studi di Verona, Strada le Grazie 15, 37134 Verona, Italy.}
\date{}
\begin{document}
\maketitle
\begin{abstract}
  We study snarks whose edges cannot be covered by fewer than five
  perfect matchings. Esperet and Mazzuoccolo found an infinite family
  of such snarks, generalising an example provided by H\"{a}gglund. We
  construct another infinite family, arising from a generalisation in
  a different direction. The proof that this family has the requested
  property is computer-assisted. In addition, we prove that the snarks
  from this family (we call them \emph{treelike snarks}) have circular
  flow number $\phi_C (G)\ge5$ and admit a 5-cycle double cover.

\end{abstract}

{\bf Keywords:}
Snark; excessive index; circular flow number; cycle double cover.


\section{Introduction}\label{intro}

All graphs considered are finite and simple (without loops or multiple edges). We shall use the term multigraph when multiple
edges are permitted.
Most of our terminology is standard; for further definitions and notation not explicitly stated in the paper, please refer to \cite{BM}.

By Vizing's theorem, the edge chromatic number of every cubic
graph is either three or four. In order to study the class of cubic graphs with edge chromatic number equal to four, it is usual to exclude certain trivial modifications. Thus, a {\em snark} (cf. e.g.~\cite{HS}) is defined as a bridgeless cubic graph with edge chromatic
number equal to four that contains no circuits of length at most four and no non-trivial $3$-edge cuts.

There is a vast literature on snarks and their properties --- see, e.g., \cite{ALRS,BGHM,KaRa10,Ko96-1,MaRa06}. The interested reader will find an introduction to the field in, e.g., \cite{HS} or \cite{Zh97}.

A \emph{perfect matching}, or {\em 1-factor}, in a graph $G$ is a regular spanning subgraph of degree $1$.
In this context, a {\em cover} of $G$ is a set of perfect matchings of $G$ such that each edge of $G$ belongs to at
least one of the perfect matchings.
Following the terminology introduced in \cite{BC}, the {\em excessive index of $G$}, denoted by
$\chi'_e(G)$, is the least integer $k$ such that the edge-set of $G$ can be covered by $k$ perfect matchings. Note that the excessive index is sometimes also called {\em perfect matching index} (see \cite{FV}).

The main source of motivation for the above notion is the conjecture
of Berge which asserts that the excessive index of any cubic
bridgeless graph is at most 5. As proved recently by the third
author~\cite{M}, this conjecture is equivalent to the famous
conjecture of Berge and Fulkerson~\cite{Fu71} that the edge-set of
every bridgeless cubic graph can be covered by six perfect matchings,
such that each edge is covered precisely twice.

It is NP-complete to decide for a cubic bridgeless graph $G$ whether $\chi'_e(G)=3$, since this property is equivalent to
3-edge-colourability. Similarly, Esperet and Mazzuoccolo~\cite{EM14}
proved that it is NP-complete to decide whether $\chi'_e(G)\leq 4$, as well as to
decide whether $\chi'_e(G)=4$.

As for cubic bridgeless graphs $G$ with $\chi'_e(G) \geq 5$, it was asked by Fouquet and Vanherpe~\cite{FV} whether the Petersen graph was the only such graph that is cyclically 4-edge-connected. H\"{a}gglund~\cite{Ha12} constructed another example (of order $34$) and asked for a characterisation of such graphs~\cite[Problem 3]{Ha12}. Esperet and Mazzuoccolo~\cite{EM14} generalized H\"{a}gglund's example to an infinite family.

We now outline the structure of the present paper, referring to the
later sections for the necessary definitions. In Section~\ref{tls}, we
construct a family of graphs called {\em treelike snarks}. We prove
that they are indeed snarks and (in Section~\ref{eitls}) that their
excessive index is greater than or equal to five. We thus expand the
known family of snarks of excessive index $\ge 5$, with a different
generalization of H\"{a}gglund's example than the one found by Esperet
and Mazzuoccolo in \cite{EM14}. The proof relies on the use of a
computer to determine a certain set of patterns (see Sections
\ref{sec:patterns} and \ref{appendix}).

In Section~\ref{cfntls}, we recall the definition of the circular flow
number and the $5$-Flow Conjecture of Tutte. We show that treelike snarks are, in a sense, also critical for this conjecture --- namely, their circular flow number is greater than or equal to five.

Section~\ref{cdctls} is devoted to cycle double covers. Since it is known that any cubic graph $G$ that is a counterexample to the Cycle Double Cover Conjecture satisfies $\chi'_e(G)\geq 5$, it is natural to ask whether treelike snarks admit cycle double covers. We show that this is indeed the case. In fact, using a new general sufficient condition for the existence of a 5-cycle double cover, we show that treelike snarks satisfy the $5$-Cycle Double Cover Conjecture of Preissmann \cite{Preis81} and Celmins \cite{Ce84}.

\section{Preliminaries}\label{prelim}

For a given graph $G$, the vertex set of $G$ is denoted by $V(G)$, and its edge set by $E(G)$. Each edge is viewed as composed of two \emph{half-edges} (that are \emph{associated} to each other) and we let $E(v)$ denote the set of half-edges incident with a vertex $v$.

A \emph{join} in a graph $G$ is a set $J \subseteq E(G)$
such that the degree of every vertex in $G$ has the same parity as its
degree in the graph $(V(G),J)$. In the literature, the terms
\emph{postman join} or \emph{parity subgraph} have essentially the
same meaning. Throughout this paper, we will be dealing with cubic
graphs, so \emph{joins} will be spanning subgraphs where each vertex
has degree $1$ or $3$.

As usual, e.g., in the theory of nowhere-zero flows, we define a \emph{cycle} in a graph $G$ to be any subgraph $H \subseteq G$ such that each vertex of $H$ has even degree in $H$. Thus, a cycle need not be connected. A \emph{circuit} is a  connected $2$-regular graph. In a cubic graph, a cycle is a disjoint union of circuits and isolated vertices.

\begin{observation}\label{cyclevsjoin}
A subgraph $H \subseteq G$ is a cycle in $G$ if and only if $E(G) - E(H)$ is a join.
\end{observation}

An \emph{edge-cut} (or just \emph{cut}) in $G$ is a set $T \subseteq E(G)$
such that $G-T$ has more components than $G$, and $T$ is inclusionwise
minimal with this property. A cut is \emph{trivial} if it consists of
all edges incident with a particular vertex. A \emph{bridge} is a cut of size $1$. A graph is \emph{bridgeless} if it contains no bridge (note that with this definition, a bridgeless graph may be disconnected). A graph $G$ is said to be \emph{$k$-edge-connected} (where $k \ge 1$) if $G$ is connected and contains no cut of size at most $k-1$, i.e. $G$ is such that its edge-connectivity $k'(G) \ge k$. A set $S$ of edges of a graph $G$ is a \emph{cyclic edge cut} if $G-S$ has two components each of which contains
a cycle. We say that a graph $G$ is \emph{cyclically
  $k$-edge-connected} if $\size{E(G)} > k$ and each cyclic edge cut of $G$ has size at least $k$.

A \emph{cover} (or \emph{covering}) of a graph $G$ is a family $\cal
F$ of subgraphs of $G$, not necessarily edge-disjoint, such that
$\bigcup_{F \in {\cal F}} E(F) = E(G)$. A \emph{$(1,2)$-cover} is a
cover in which each edge appears at most twice.

Recall that the following lemma is very useful when
studying edge-colourability of cubic graphs (see, for instance, Lemma B.1.3 in \cite{Zh12}).

\begin{lemma}[Parity Lemma]\label{paritylemma}
Let $G$ be a cubic graph and let $c:E(G) \to \{1,2,3\}$ be a $3$-edge-colouring of
$G$. Then, for every edge-cut $T$ in $G$,
$$|T \cap c^{-1}(i)| \equiv |T| \pmod 2$$ for $ i \in \{1,2,3\}$.
\end{lemma}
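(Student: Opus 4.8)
The plan is to derive everything from a single parity count over one side of the cut, after two elementary observations. First, since $G$ is cubic and $c$ is a proper $3$-edge-colouring, each colour class $M_i:=c^{-1}(i)$ is a \emph{perfect matching}: the three half-edges at any vertex carry three different colours, so exactly one edge of colour $i$ is incident with each vertex. Second, by the definition of an edge-cut given in this section, $G-T$ has more components than $G$ and $T$ is minimal with this property; minimality forces $T$ to lie inside a single component and to split it into exactly two parts, so there is a set $S\subseteq V(G)$ with $T=\partial(S)$, the set of edges of $G$ having exactly one endpoint in $S$.

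Now I would count degrees over the vertices of $S$ in two ways. Writing $e(S)$ for the number of edges of $G$ with both endpoints in $S$, summing degrees in $G$ over $S$ gives $3\size{S}=2e(S)+\size{\partial(S)}$, hence $\size{T}=\size{\partial(S)}\equiv\size{S}\pmod 2$. Doing the same inside $M_i$, where every vertex of $S$ has degree exactly $1$, gives $\size{S}=2e_{M_i}(S)+\size{\partial(S)\cap M_i}$, hence $\size{T\cap c^{-1}(i)}=\size{\partial(S)\cap M_i}\equiv\size{S}\pmod 2$. Combining the two congruences yields $\size{T\cap c^{-1}(i)}\equiv\size{S}\equiv\size{T}\pmod 2$ for every $i\in\{1,2,3\}$, which is the assertion.

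There is essentially no genuine obstacle here, as the statement is classical and elementary; the only point worth a word of care is the passage from an arbitrary (minimal) edge-cut to one of the form $\partial(S)$, which is immediate from the definitions. As an alternative to the degree count, one may observe that $E(G)\setminus M_i$ is a $2$-regular spanning subgraph of $G$, hence a cycle in the sense used here, and that every cycle meets every edge-cut in an even number of edges; applied to $T$ this shows at once that $\size{T}-\size{T\cap c^{-1}(i)}$ is even.
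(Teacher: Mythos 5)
Your proof is correct. Note that the paper itself does not prove this lemma at all: it is stated as a recalled classical fact with a pointer to Lemma B.1.3 of Zhang's \emph{Circuit Double Cover of Graphs}, so there is no internal argument to compare yours against. Your double-counting argument is the standard one and is complete: each colour class $c^{-1}(i)$ is a perfect matching because the colouring is proper and $G$ is cubic (properness is implicit in the paper's use of ``$3$-edge-colouring''), the minimality built into the paper's definition of an edge-cut does indeed give $T=\partial(S)$ for a vertex set $S$, and the two degree sums over $S$ (in $G$ and in $c^{-1}(i)$) yield $|T|\equiv|S|\equiv|T\cap c^{-1}(i)|\pmod 2$. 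Your alternative remark --- that $E(G)\setminus c^{-1}(i)$ is a cycle in the paper's sense and cycles meet edge-cuts evenly --- is also valid and fits nicely with Observation~\ref{cyclevsjoin} and the join/cycle language used later in the paper.
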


\section{Treelike snarks}\label{tls}

As noted in Section~\ref{prelim}, we view each edge of a graph as
composed of two half-edges. We now extend the notion of a graph by
allowing for \emph{loose} half-edges that do not form part of any
edge; the resulting structures will be called \emph{generalised
  graphs}. A generalised graph is \emph{cubic} if each vertex is
incident with three half-edges.

We define a \emph{fragment} $F$ as a generalised cubic graph with
exactly five loose half-edges, ordered in a sequence (see Figure \ref{fig:fragment}).
The \emph{Petersen fragment} $F_0$ is the fragment with loose
half-edges ($a_1,\dots,a_5$) obtained from the Petersen graph (see
Figure \ref{petfrag}) as follows:
\begin{itemize}
\item in the Petersen graph, remove a vertex $x$, keeping the
  half-edges $a_3,a_4,a_5$ incident with its neighbours $y,z,t$,
  respectively,
\item subdivide the two edges incident with $y$,
\item and add half edges $a_1$, $a_2$ to the new vertices of the subdivision (see
  Figures \ref{petersen}, \ref{petfrag}) in any one of the two ways.
\end{itemize}
This fragment $F_0$ will be particularly important throughout the paper.

\begin{figure}[h]
  \centering
  \includegraphics[height=4cm]{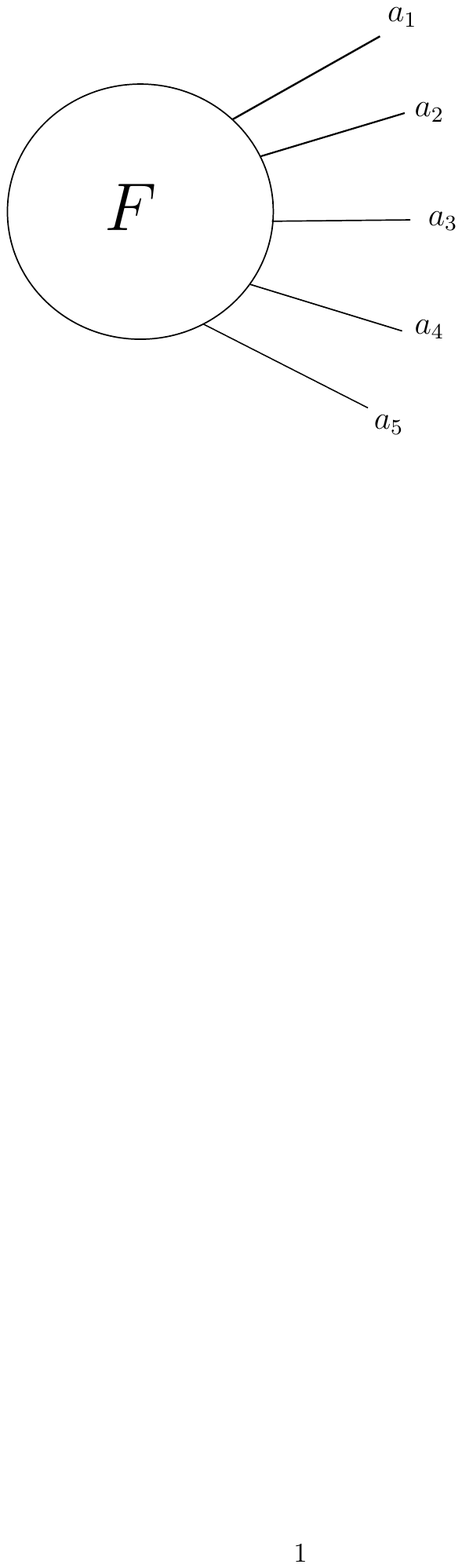}
  \caption{A fragment.}
  \label{fig:fragment}
\end{figure}

\begin{figure}[h]
  \hf
  \begin{subfigure}[t]{5cm}
		\centering
		\includegraphics[height=3.7cm]{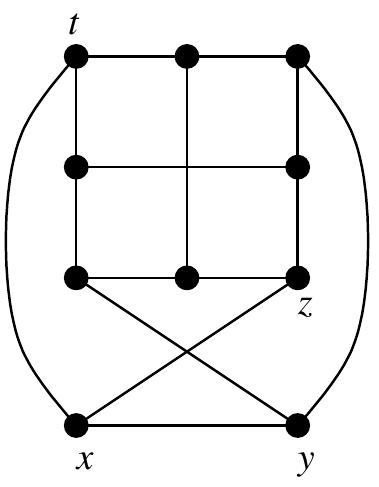}
		\caption{The Petersen graph.}\label{petersen}		
	\end{subfigure}
    \quad \quad \quad
	\begin{subfigure}[t]{5cm}
		\centering
		\includegraphics[height=3.7cm]{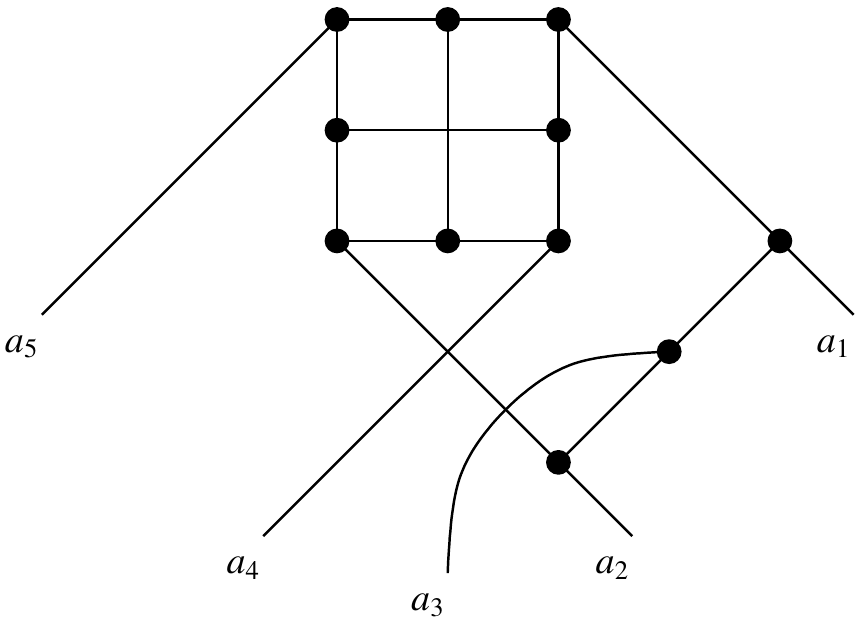}
		\caption{The Petersen fragment $F_0$.}\label{petfrag}
	\end{subfigure}
        \hf
	\caption{Constructing the Petersen fragment.}\label{fragments}
\end{figure}

A \emph{Halin graph} is a plane graph consisting of a planar representation of a tree without degree 2 vertices,
and a circuit on the set of its leaves (cf., e.g., \cite{Hal64,CNP83}).

Let $H_0$ be a cubic Halin graph consisting of the tree $T_0$ and the
circuit $C_0$.  The \emph{treelike snark} $G(T_0,C_0)$ is obtained by
the following procedure:
\begin{itemize}
\item for each leaf $\ell$ of $T_0$, we add a copy $F^\ell_0$ of the
  Petersen fragment $F_0$ with loose half-edges $(a_1,\dots,a_5)$ and
  attach the half-edge $a_3$ to $\ell$,
\item for each leaf $\ell$ of $T_0$ and its successor $\ell'$ with
  respect to a fixed direction of $C_0$, if $F^\ell_0$ has loose
  half-edges $(a_1,\dots,a_5)$ and $F^{\ell'}_0$ has loose half-edges
  $(a'_1,\dots,a'_5)$, then we join $a_4$ to $a'_2$ and $a_5$ to
  $a'_1$, obtaining new edges.
\end{itemize}
If there is no danger of a confusion, we abbreviate $G(T_0,C_0)$ to $G(T_0)$.

Some small examples of treelike snarks are shown in Figure~\ref{smalltls}.

\begin{figure}[h]	
	\centering
	\begin{subfigure}[h]{7cm}
		\centering
		\includegraphics[width=5cm]{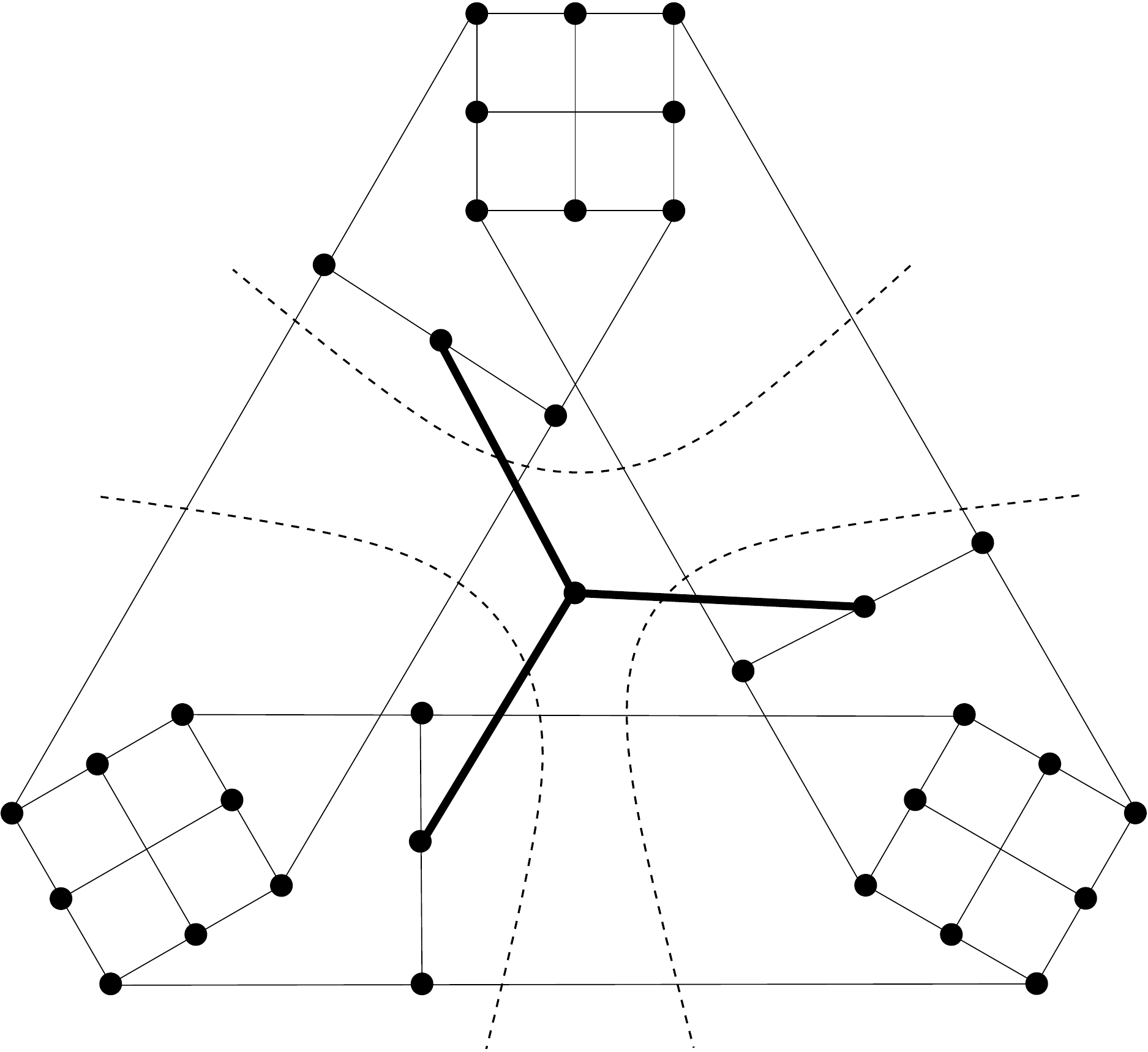}
		\caption{\small Windmill 1 \cite[Figure 7]{EM14}.}\label{windmill1}		
	\end{subfigure}
    \quad \quad \quad
	\begin{subfigure}[h]{7cm}
		\centering
		\includegraphics[width=5cm]{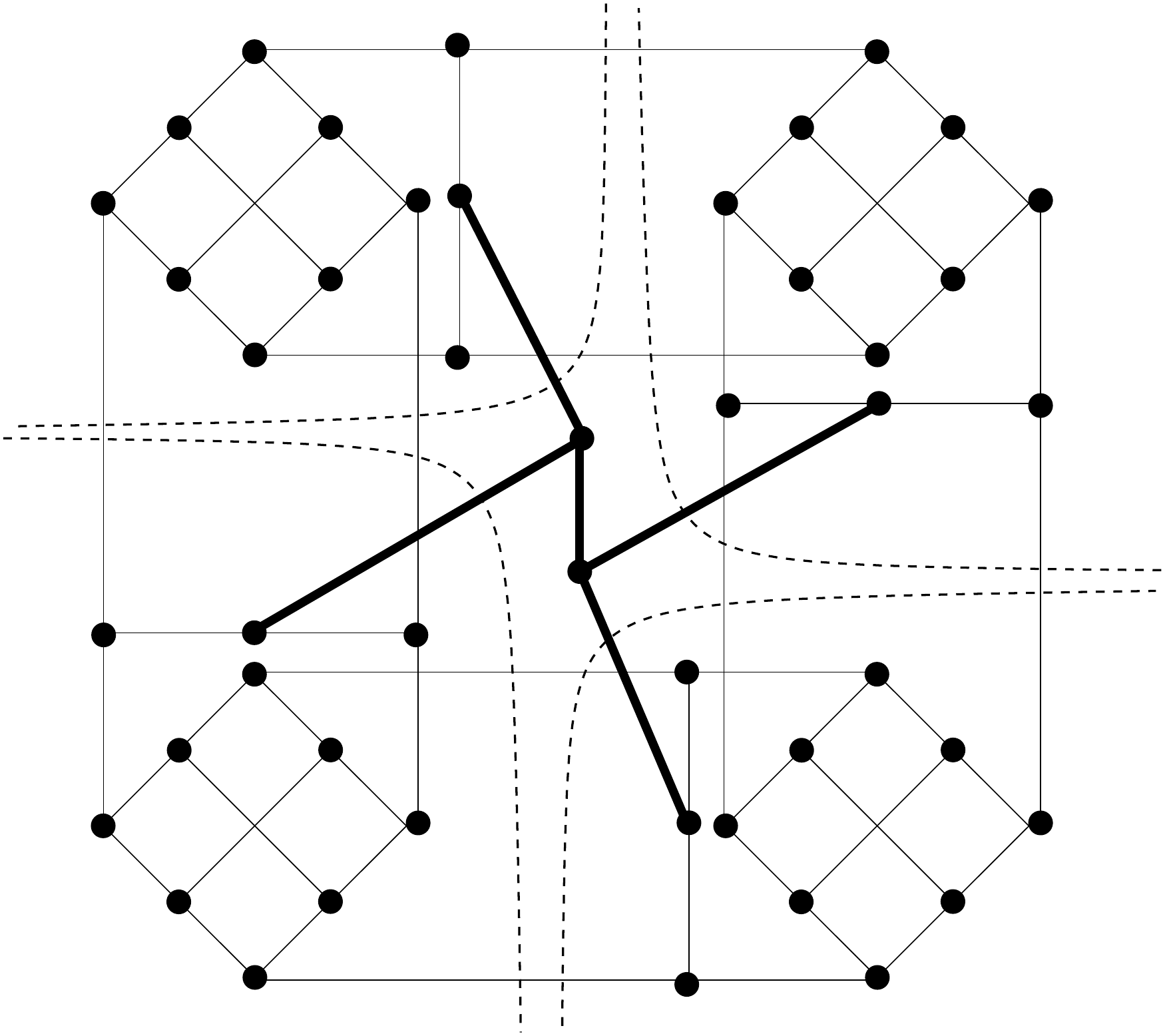}
		\caption{Blowup$(Prism,C_4)$ \cite[Figure 7]{Ha12}.}\label{hagglund}
	\end{subfigure}
	\caption{Small treelike snarks.}\label{smalltls}
\end{figure}

Let $F_1$ and $F_2$ be fragments with half edges $(a_1,a_2,a_3,a_4,a_5)$ and $(a'_1,a'_2,a'_3,a'_4,a'_5)$ respectively.
We define the \emph{sum} $F_1 + F_2$ of $F_1$ and $F_2$ as the fragment obtained from $F_1$ and $F_2$ by the following operations:
\begin{itemize}
\item joining the half-edges $a_5$ to $a'_1$ and $a_4$ to $a'_2$ into
  edges,
\item adding a new vertex adjacent to $a_3$, $a'_3$ and a new half-edge $a''_3$,
\end{itemize}
with half-edges $(a_1,a_2,a''_3,a'_4,a'_5)$ (see Figure \ref{fragsumfig}).

The \emph{fusion} of $F_1$ with $F_2$ is the cubic graph obtained from
the union of $F_1$ and $F_2$ by joining each pair of half-edges $a_i$,
$a'_{6-i}$, where $1\leq i \leq 5$ (see Figure \ref{fusion}).

Observe that any treelike snark $G(T_0,C_0)$ may be obtained as a
fusion of two finite sums of Petersen fragments (with appropriate
bracketings, determined by $T_0$). In fact, one of the sums can be
taken to be just a Petersen fragment.

\begin{figure}[h]	
	\centering
	\begin{subfigure}[h]{7cm}
		\centering
		\includegraphics[width=7cm]{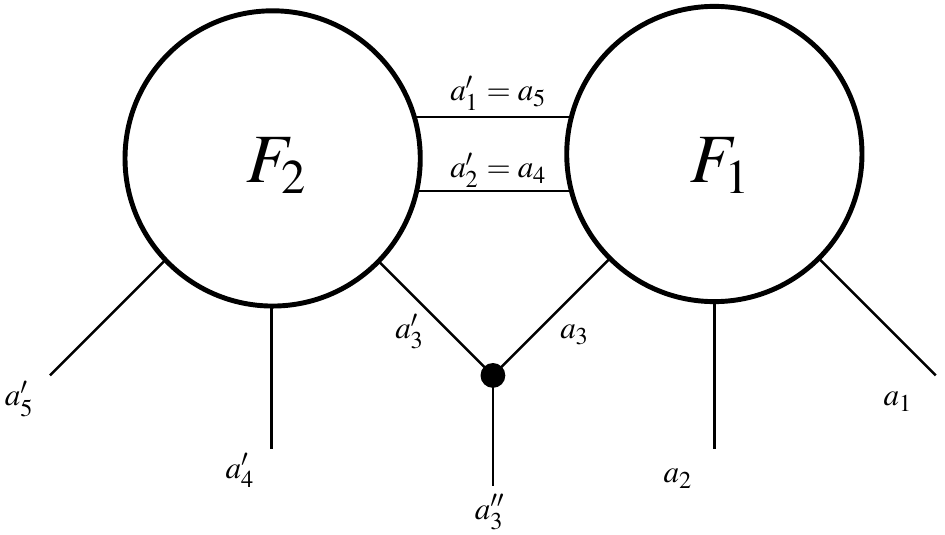}
		\caption{Fragment sum $F_1 + F_2$.}\label{fragsumfig}		
	\end{subfigure}
    \quad \quad \quad
	\begin{subfigure}[h]{7cm}
		\centering
		\includegraphics[width=6.5cm]{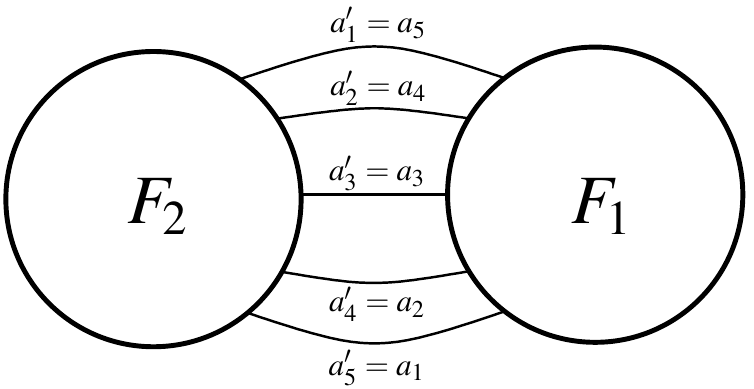}
		\caption{Fusion of $F_1$ and $F_2$.}\label{fusion}
	\end{subfigure}
	\caption{}
\end{figure}

The following properties of the Petersen fragment $F_0$, arising from its symmetries, will be useful later.

\begin{enumerate}[(i)]
\item The half-edges $a_1$ and $a_2$ are symmetrically equivalent, in
  the sense that reversing their order leads to an isomorphic fragment
  (with the notion of isomorphism defined in the natural way). We will
  call these half-edges the \emph{right spokes}.
\item The half-edges $a_4$ and $a_5$ are symmetrically equivalent. We
  will call these the \emph{left spokes}.
\item The endvertices of the half-edges $a_1,a_3$ and $a_2$ form a
  path of length two (in that order), which we will call the
  \emph{special path}. The half-edge $a_3$ may also be referred to as
  the \emph{central spoke}.
\end{enumerate}

The \emph{distance} between two half-edges is the length of a shortest
path connecting their (unique) vertices.

The following proposition justifies the phrase `treelike snark':
\begin{proposition}\label{tls-snark}
Treelike snarks are snarks.
\end{proposition}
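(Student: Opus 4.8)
We must establish for $G=G(T_0,C_0)$ the four defining properties of a snark: that it is cubic, bridgeless and has no non‑trivial $3$‑edge cut, has no circuit of length at most $4$, and is not $3$‑edge‑colourable. That $G$ is cubic is immediate from the construction: every loose half‑edge of every copy $F_0^\ell$ of the Petersen fragment is joined to exactly one other half‑edge ($a_3$ to $T_0$; $a_4,a_5$ to the successor fragment; $a_1,a_2$ from the predecessor fragment), while the internal vertices of $T_0$ keep degree $3$. For girth and edge‑connectivity we use that each $F_0^\ell$ is the Petersen graph with a vertex deleted and two edges subdivided (plus two loose half‑edges), hence has girth $5$ and is $2$‑edge‑connected, that the five edges leaving a fragment form a matching whose endpoints are five distinct vertices (one in $T_0$, two in each of the two neighbouring fragments), and that the Halin graph obtained from $G$ by contracting all fragments is $3$‑connected. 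A short case analysis on how a circuit, resp.\ a cut of size $\le 3$, can cross fragment boundaries --- using for the girth that in $F_0$ the distances between the five loose‑half‑edge vertices are all at least $3$, except that the central spoke $a_3$ is at distance $1$ from $a_1$ and from $a_2$ --- then yields girth $\ge 5$ and rules out bridges and non‑trivial $3$‑edge cuts (in fact $G$ turns out to be cyclically $4$‑edge‑connected).

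The heart of the matter is non‑$3$‑colourability, and the key is a property of the fragment $F_0$. Consider a proper $3$‑edge‑colouring of $F_0$ in which each loose half‑edge also gets a colour, consistently with properness at its endvertex; this is exactly what the restriction to $F_0^\ell$ of a $3$‑edge‑colouring of $G$ gives. \emph{Claim: in any such colouring $c(a_1)\ne c(a_2)$ while $c(a_4)=c(a_5)$.} Since $F_0$ has an odd number ($11$) of vertices, for each colour the number of loose half‑edges of that colour is odd, so among the five loose half‑edges one colour occurs three times and each of the other two once. Suppose first $c(a_1)=c(a_2)=\alpha$. At the middle vertex of the special path $a_1$--$a_3$--$a_2$ the two path‑edges must take the two colours other than $\alpha$, forcing $c(a_3)=\alpha$; hence the colour class $\alpha$ consists of $\{a_1,a_2,a_3\}$ together with a perfect matching of $F_0$ minus the endvertices of $a_1,a_2,a_3$, which is the Petersen graph with two adjacent vertices deleted and has only two perfect matchings. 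In either case the graph obtained from $F_0$ by deleting the colour class $\alpha$ still contains an odd circuit, so it cannot be properly edge‑coloured with the remaining two colours --- a contradiction; thus $c(a_1)\ne c(a_2)$. Now suppose in addition $c(a_4)\ne c(a_5)$. Then, both spoke pairs being bichromatic, the monochromatic triple of loose half‑edges must be $a_3$ together with one of $a_1,a_2$ and one of $a_4,a_5$; in each of these four cases the colour class $c(a_3)$ is forced to be those three loose half‑edges plus a perfect matching of an explicit $8$‑vertex subgraph of $F_0$, which is unique, and again the complement of this colour class in $F_0$ contains an odd circuit --- a contradiction. Hence $c(a_4)=c(a_5)$, proving the claim.

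Granting the claim, suppose $G$ had a proper $3$‑edge‑colouring $c$, fix a leaf $\ell$ of $T_0$ and its successor $\ell'$ on $C_0$, and apply the claim to the fragments $F_0^\ell$ and $F_0^{\ell'}$. From $F_0^\ell$ we get $c(a_4^\ell)=c(a_5^\ell)$; since $a_4^\ell a_2^{\ell'}$ and $a_5^\ell a_1^{\ell'}$ are edges of $G$, this gives $c(a_1^{\ell'})=c(a_5^\ell)=c(a_4^\ell)=c(a_2^{\ell'})$, contradicting $c(a_1^{\ell'})\ne c(a_2^{\ell'})$ from $F_0^{\ell'}$. Hence $G$ is not $3$‑edge‑colourable, and being cubic it has edge chromatic number $4$; together with the first paragraph, $G$ is a snark.

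The main obstacle is the claim about $F_0$. Its proof is finite but somewhat delicate: for each forbidden colour pattern on the spokes one must pin down the very few perfect matchings from which a monochromatic colour class could be built, exploiting the scarcity of perfect matchings in the relevant small subgraphs of $F_0$ (the Petersen graph with two adjacent vertices removed, and similar graphs), and then verify in every resulting case that the complementary subgraph of $F_0$ is non‑bipartite. One should also fix carefully the notion of a proper $3$‑edge‑colouring of a generalised graph so that restriction from $G$ to a fragment behaves as stated, and the boundary case analyses underlying the girth and connectivity claims, though routine, need to be carried out with the distance data of $F_0$ in hand.
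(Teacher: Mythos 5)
Your proof is correct, and for the crucial non-colourability step it takes a genuinely different route from the paper. The paper argues globally: it applies the Parity Lemma to the $4$-cut $\{a_4,a_5,b_1,b_2\}$ isolating the $8$ Petersen vertices of a fragment to show that the two left spokes must share a colour (otherwise a $3$-edge-colouring of the Petersen graph results), propagates this around the circuit $C_0$ (right spokes of one fragment being left spokes of the next), applies the Parity Lemma again to the $5$-cut around each fragment to force all five spokes of every fragment to carry one common colour, and finally finds two adjacent central spokes at a tree vertex with two leaf neighbours. You instead prove a purely local dichotomy for the fragment --- $c(a_4)=c(a_5)$ but $c(a_1)\neq c(a_2)$ in any proper $3$-edge-colouring of $F_0$ with coloured half-edges --- and get an immediate contradiction at a single pair of connecting edges, with no need for the global propagation or the tree structure. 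The price is a finite case analysis (the parity count $(3,1,1)$ on the half-edge colours, the forced central spoke, the uniqueness or near-uniqueness of the relevant perfect matchings of $P-\{x,y\}$ and of $P$ minus a $4$-vertex path, and the odd circuits left in the complement); I checked the essential cases and your asserted outcomes are right, e.g.\ in the case $c(a_1)=c(a_2)$ the $5$-circuit of $P-N$ through $y$ becomes a $7$-circuit of $F_0$ minus the colour class. Your approach has the merit of being self-contained at the fragment level; the paper's is shorter because the Parity Lemma replaces most of the matching enumeration.

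Two small points. First, your distance statement for the girth argument omits that the endvertices of $a_1$ and $a_2$ are at distance $2$ (they form the special path with $a_3$'s endvertex in the middle); this does not endanger the conclusion, since a short circuit through both right spokes would have to pass through the left spokes of the predecessor fragment, but the statement as written is inaccurate. Second, the case checks in your Claim are asserted rather than carried out; since you flag this explicitly and the assertions are verifiably true, this is comparable in rigour to the paper's own ``it is easy to derive a $3$-edge-colouring of the Petersen graph,'' but a complete write-up should exhibit the matchings and the odd circuits (or replace the first half of the Claim's verification by the observation that the $5$-circuit of $P-N$ containing $y$ always survives, subdivided twice, in the complement of the colour class).
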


\begin{proof}
  Let $G(T_0) = G(T_0,C_0)$ be a treelike snark. By construction,
  $G(T_0)$ is cubic.  As noted above, $G(T_0)$ is a fusion of a
  Petersen fragment with a finite sum of Petersen fragments
  $F^\ell_0$, where $\ell$ ranges over the leaves of $T_0$.

  We prove that $G(T_0)$ has girth 5. In each Petersen fragment $F_0$
  there is a $5$-circuit (a circuit of length $5$) but no shorter
  circuits. Suppose that $G(T_0)$ contains a circuit $C$ of length at
  most $4$ traversing some of the loose half-edges $(a_1,\dots,a_5)$
  of a Petersen fragment $F^\ell_0$. The distance from each left spoke
  $a_4,a_5$ to any other half-edge of $F_0$ is at least $3$, hence $C$
  traverses no left spoke of $F^\ell_0$. By the structure of $G(T_0)$
  as a sum of fragments, right spokes of $F^\ell_0$ are joined to left
  spokes of some other Petersen fragment, so no right spoke of
  $F^\ell_0$ is traversed by $C$ either. The only remaining loose
  half-edge of $F^\ell_0$ is $a_3$, a contradiction. Thus, $G(T_0)$ has
  girth $5$.

  We claim that $G(T_0)$ is cyclically $4$-edge-connected. Since
  $G(T_0)$ is clearly $3$-edge-connected, it suffices to show that
  there is no nontrivial $3$-edge-cut. This is clearly true of the
  Petersen fragment has no nontrivial $3$-edge-cut, and one can easily
  check that neither the sum nor the fusion with a Petersen fragment
  introduce a nontrivial $3$-edge-cut.
  
To determine the chromatic index of $G(T_0)$, suppose $G(T_0)$ admits
a 3-edge-colouring. Denote by $b_1$ and $b_2$ the two edges of a Petersen fragment $F^\ell_0$ which are incident with $a_1$ and $a_2$, respectively, and are not edges of the special path.  If $F^\ell_0$ has left spokes
of different colours, then by Lemma~\ref{paritylemma}, the same two colours also appear on $b_1$ and $b_2$, and it is easy to
derive a $3$-edge-colouring of the Petersen graph, a contradiction.
Therefore, we can assume that in each Petersen fragment, the two left spokes are given the same
colour. By construction, the right spokes of $F^\ell_0$ are left
spokes of another Petersen fragment, so they also share a
colour (in general, different from that of the left spokes). Suppose that a Petersen
fragment has the two left spokes of colour $a$ and the two right spokes
of colour $b$. Then $a=b$, since by Lemma \ref{paritylemma}, neither
$a$ nor $b$ can appear exactly twice in the $5$-cut separating the
fragment from the rest of the graph. In
addition, the lemma implies that all central spokes share the same
colour $a$, a contradiction since at least two of them are adjacent
edges. Thus, the chromatic index of $G(T_0)$ is $4$.
\end{proof}


\section{Patterns}
\label{sec:patterns}

The notion of a perfect matching is easily extended from graphs to
generalised graphs: it is simply a set $M$ of edges and loose half-edges
such that each vertex is incident with exactly one element of $M$. We
will be interested in $(1,2)$-covers of a given fragment by $4$ perfect
matchings. To describe the `behaviour' of a cover on the loose
half-edges $a_1,\dots,a_5$ of a fragment $F$, we introduce the
following definitions.

A \emph{pattern} $\pi$ is a sequence of five subsets of
$\alphabet$ of size $1$ or $2$ each, such that each symbol from
$\alphabet$ appears in an odd number of the subsets in
$\pi$. Examples of patterns are $\patt A A {AB}{AC}{AD}$ or
$\patt{AB}{AC}{AD}{BD}{BD}$ (we omit both the set brackets and the
parentheses enclosing a sequence).

Observe that any $(1,2)$-cover of a fragment by $4$ perfect
matchings determines a pattern in a natural way. For instance, if a
cover by perfect matchings $A,B,C,D$ is such that each of the loose
half-edges $a_1,\dots,a_5$ is contained in $A$, and in addition,
$a_3,a_4,a_5$ are contained in $B,C$ and $D$ respectively, then the
corresponding pattern is $\patt A A {AB}{AC}{AD}$.

The set of all patterns determined by $(1,2)$-covers of a fragment
$F$ by $4$ perfect matchings is called the \emph{pattern set of $F$}
and denoted by $\Pi(F)$.

Using a computer program implemented in C, we have determined the
pattern sets for each of the following fragments: $F_0$, $F_0+F_0$,
$(F_0+F_0)+F_0$, $F_0+(F_0+F_0)$, $(F_0+F_0)+(F_0+F_0)$. The results
are summarised in Section~\ref{appendix}. One obvious conclusion from
these data is that
\begin{equation*}
  \Pi(F_0+(F_0+F_0)) \neq \Pi((F_0+F_0)+F_0).
\end{equation*}
This somewhat surprising lack of associativity is illustrated in
Figure~\ref{f0nonaso}.

\begin{figure}[h]
	\centering
	\begin{subfigure}[t]{3cm}
		\centering
		\includegraphics[width=3cm]{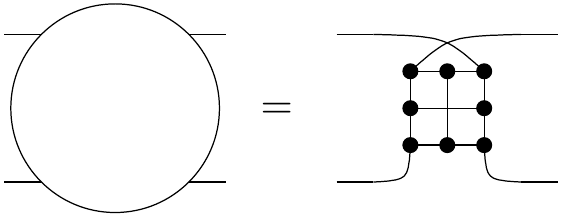}
	\end{subfigure}
	
\vspace{0.7cm}
	
	\begin{subfigure}[t]{7cm}
		\centering
		\includegraphics[width=7cm]{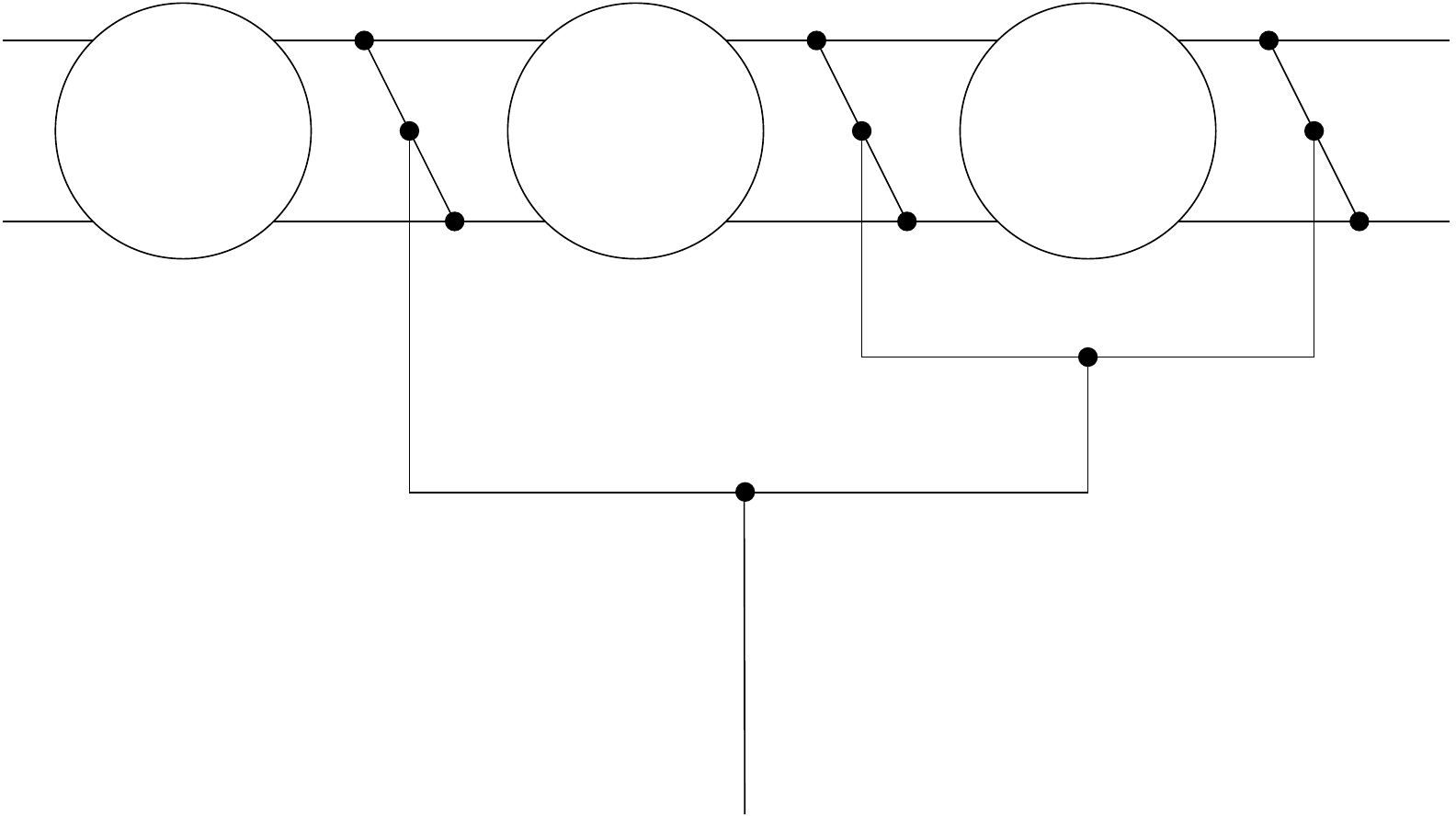}
		\caption{The fragment $F_0+(F_0+F_0)$.}\label{fxff}		
	\end{subfigure}
    \quad
	\begin{subfigure}[t]{7cm}
		\centering
		\includegraphics[width=7cm]{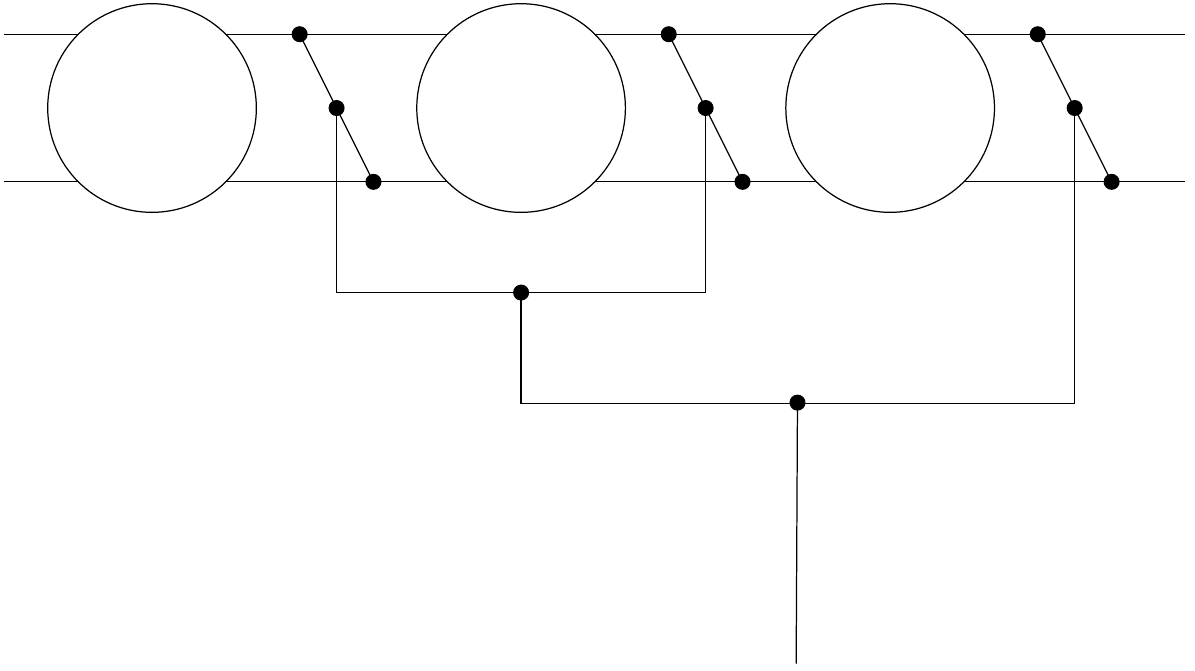}
		\caption{The fragment $(F_0+F_0)+F_0$.}\label{ffxf}
	\end{subfigure}
		\caption{Lack of associativity in sums of Petersen
                  fragments.}\label{f0nonaso}

\end{figure}

A further observation about the results presented in
Section~\ref{appendix} is given by the following lemma:

\begin{lemma}\label{patternsets}
Let $F_0$ be the Petersen fragment. The following inclusions hold:
\begin{enumerate}[(i)]
\item $\Pi(F_0+(F_0+F_0)) \subset \Pi(F_0+F_0)$,
\item $\Pi((F_0+F_0)+(F_0+F_0)) \subset \Pi((F_0+F_0)+F_0)$.
\end{enumerate}
\end{lemma}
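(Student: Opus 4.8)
The plan is to prove both inclusions by the same computer-assisted strategy that was used to generate the pattern sets, so the argument is essentially a bookkeeping verification rather than a structural one. First I would set up the key monotonicity principle for sums: if $F$ and $F'$ are fragments and $\Pi(F') \subseteq \Pi(F)$, then $\Pi(F'+F_0) \subseteq \Pi(F+F_0)$ and $\Pi(F_0+F') \subseteq \Pi(F_0+F)$. This holds because a $(1,2)$-cover of a sum $G_1+G_2$ by four perfect matchings restricts to a $(1,2)$-cover of each summand by four perfect matchings (after identifying the four colour classes coherently across the junction), and the pattern of the sum is determined by a fixed rule from the patterns of the two summands together with the choice of how the half-edges $a_5,a'_1$ and $a_4,a'_2$ are joined and how the new central vertex is coloured; in particular, only patterns that actually occur on the summands can contribute. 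Since $\Pi(F_0)$ is the universe of patterns available for the right-hand summand in every case here, the reduction of each inclusion to a statement about the left-hand summand is immediate: (i) reduces to $\Pi(F_0+F_0) \subseteq \Pi(F_0)$ being used on the outer sum with inner argument $F_0+F_0$ versus $F_0$, and (ii) reduces to $\Pi((F_0+F_0)+F_0) \subseteq \Pi(F_0+F_0)$ in the same way — wait, more precisely, I would observe that $F_0+(F_0+F_0)$ is $F_0 + X$ with $X = F_0+F_0$, and $F_0+F_0$ is $F_0+F_0$, so (i) is exactly the instance of the monotonicity principle with $F' = F_0+F_0$, $F = F_0$, provided we already know $\Pi(F_0+F_0)\subseteq\Pi(F_0)$.

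That last caveat shows the real work: I cannot get (i) for free, because I would need $\Pi(F_0+F_0)\subseteq\Pi(F_0)$ as an input, and that inclusion is not claimed in the lemma (and may well be false). So the honest plan is to verify (i) and (ii) directly against the tables in Section~\ref{appendix}: for each pattern $\pi$ listed in $\Pi(F_0+(F_0+F_0))$, exhibit it (or confirm its presence) in the list for $\Pi(F_0+F_0)$; and for each pattern in $\Pi((F_0+F_0)+(F_0+F_0))$, confirm its presence in the list for $\Pi((F_0+F_0)+F_0)$. This is a finite check over the explicitly tabulated sets. To make it more than a brute appeal to the tables, I would also point out the conceptual reason the inclusions run in this direction: adding another Petersen fragment into the sum imposes additional constraints (the new summand must itself admit a compatible $(1,2)$-cover by four perfect matchings with the prescribed boundary behaviour), so enlarging the fragment can only shrink, never grow, the set of realisable boundary patterns — hence $\Pi$ is antitone along the operation ``replace a boundary by another sum with $F_0$''. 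The strictness of the inclusions ($\subset$ rather than $\subseteq$) then follows by exhibiting one pattern present in the smaller fragment's set but absent from the larger fragment's set, which again is read off from the tables.

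The main obstacle is simply that this is a finite but not small case analysis, and its correctness rests on the computer enumeration reported in Section~\ref{appendix} being complete and correct; the write-up should therefore either reproduce the relevant pattern lists inline or give a clean argument (via the monotonicity/restriction principle above) that reduces the verification of each inclusion to checking a single additional pattern membership, so that a reader can confirm it without rerunning the program. The one genuinely delicate point in the monotonicity argument is the claim that a four-perfect-matching $(1,2)$-cover of a sum restricts coherently to both summands: one must check that the four colour classes can be matched up across the junction edges $a_5a'_1$ and $a_4a'_2$ and across the new central vertex, i.e. that no cover of the sum ``mixes'' colours in a way that fails to restrict. This follows from the Parity Lemma applied to the $5$-edge cut separating a summand from the rest — exactly the tool used in the proof of Proposition~\ref{tls-snark} — so I would lean on that to make the restriction map well defined, and the rest is the table check.
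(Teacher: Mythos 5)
Your operative proof is the same as the paper's: the paper disposes of this lemma with the single line ``By inspection of the lists in Section~\ref{appendix}'', i.e.\ exactly the finite table check you settle on, and you are right that its correctness rests entirely on the completeness of the computer enumeration. One caution, though: the ``conceptual reason'' you propose to add --- that summing with another Petersen fragment imposes extra constraints and hence can only \emph{shrink} the set of realisable boundary patterns, so that $\Pi$ is antitone under this operation --- is contradicted by the paper's own data: $\Pi(F_0+F_0)$ has $18$ patterns while $\Pi((F_0+F_0)+F_0)$ has $25$ (e.g.\ \patt{A}{BC}{AD}{AB}{B} appears in the latter but not the former), and the non-associativity $\Pi(F_0+(F_0+F_0))\neq\Pi((F_0+F_0)+F_0)$ noted in Section~\ref{sec:patterns} already warns that no such simple monotonicity holds. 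The forward (monotone) gluing principle you state first is fine as far as it goes, but, as you yourself observe, its hypothesis is unavailable here; so drop the antitone heuristic and let the proof stand as the bare table verification.
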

\begin{proof}
By inspection of the lists in Section~\ref{appendix}.
\end{proof}

\section{Excessive index}\label{eitls}

\begin{theorem}\label{main1}
Treelike snarks have excessive index at least $5$.
\end{theorem}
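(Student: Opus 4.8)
The plan is to argue by contradiction: suppose some treelike snark $G(T_0,C_0)$ has a $(1,2)$-cover by $4$ perfect matchings (equivalently, excessive index $\le 4$; recall that a cubic graph of excessive index $4$ has such a cover since four perfect matchings covering all edges must cover each edge at most twice by a counting argument on the $4|V|/2$ available matching-edge incidences versus the $3|V|/2$ edges). As observed in Section~\ref{tls}, $G(T_0)$ is a fusion of a Petersen fragment $F_0$ with a finite sum $S$ of Petersen fragments, the bracketing of $S$ being dictated by the tree $T_0$. A $(1,2)$-cover of $G(T_0)$ by $4$ perfect matchings restricts to a $(1,2)$-cover of each side of the fusion, and hence induces a pattern $\pi \in \Pi(S)$ on the five loose half-edges of $S$, and likewise a pattern on $F_0$; compatibility across the fusion (each $a_i$ of $F_0$ is joined to $a_{6-i}'$ of $S$, and half-edges in the same matching-pattern slot must agree) forces $\pi \in \Pi(F_0)$ as well, after the reversal $i \mapsto 6-i$. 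The first key step is therefore to reduce the infinitely many possible bracketings to a bounded list.

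The reduction is driven by Lemma~\ref{patternsets}. Any sum $S$ of $k\ge 2$ Petersen fragments, under any bracketing, is built up from the outermost sum as either $F_0 + S'$ or $S' + F_0$ (up to the fact that in a sum $F_1+F_2$ the slots $(a_1,a_2)$ come from $F_1$ and $(a_3'',a_4',a_5')$ from $F_2$) — more usefully, by repeatedly peeling off a single $F_0$ summand from one end one can show, using parts (i) and (ii) of Lemma~\ref{patternsets} as rewriting rules $\Pi(F_0+(F_0+X)) \subseteq \Pi(F_0+F_0)$ and $\Pi((F_0+F_0)+(F_0+X)) \subseteq \Pi((F_0+F_0)+F_0)$, that $\Pi(S)$ is contained in one of the finitely many pattern sets already computed, namely among $\Pi(F_0)$, $\Pi(F_0+F_0)$, $\Pi(F_0+(F_0+F_0))$, $\Pi((F_0+F_0)+F_0)$, $\Pi((F_0+F_0)+(F_0+F_0))$. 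Concretely: if the structure of $S$ near the distinguished output half-edges $(a_1,a_2,a_3)$ looks like a left-associated or right-associated chain of $F_0$'s of length $\ge 3$, Lemma~\ref{patternsets} lets us contract it to length $2$ or $3$ without enlarging the pattern set; iterating, every $\Pi(S)$ sits inside $\Pi((F_0+F_0)+(F_0+F_0))$ or $\Pi(F_0+(F_0+F_0))$, and by the lemma these in turn sit inside $\Pi((F_0+F_0)+F_0)$ and $\Pi(F_0+F_0)$ respectively.

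Given that, it remains to check a finite assertion: for each of the (few) relevant fragment pattern sets $\Pi(S)$ from Section~\ref{appendix}, there is no pattern $\pi\in\Pi(S)$ whose reverse $\bar\pi$ (slot $i$ replaced by slot $6-i$) lies in $\Pi(F_0)$ — equivalently, the fusion of $F_0$ with $S$ admits no $4$-perfect-matching $(1,2)$-cover. This is the computer-assisted core and is verified by direct inspection/comparison of the explicit lists in Section~\ref{appendix}; since the fusion of the two sides of $G(T_0)$ is exactly such a fusion with $S$ a sum covered by one of the reduced cases, $G(T_0)$ has no such cover.

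The main obstacle I expect is the reduction step itself — making precise that \emph{every} bracketing of \emph{every} size collapses, via Lemma~\ref{patternsets}, into the handful of explicitly computed cases. One has to be careful that the sum operation is not associative (this is flagged explicitly in the text around Figure~\ref{f0nonaso}), so the rewriting must respect which argument of $+$ a sub-sum occupies; the safe route is to note that in $G(T_0)$ the non-associativity is harmless because the relevant contractions always occur in a position where Lemma~\ref{patternsets}(i) or (ii) applies verbatim, and to set up an induction on the number of leaves of $T_0$ with the induction hypothesis "$\Pi(S) \subseteq \Pi(F_0+F_0)$ if the last summand was added on the right, and $\Pi(S)\subseteq \Pi((F_0+F_0)+F_0)$ if on the left." Once that bookkeeping is in place, the contradiction with $\Pi(F_0)$ is immediate from the tables.
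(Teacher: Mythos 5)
Your overall architecture --- restrict a hypothetical cover by four perfect matchings to the two sides of a fusion, control the pattern set of the compound summand via Lemma~\ref{patternsets}, and derive a contradiction --- is the right one, and your preliminary remark that a cover by four perfect matchings of a cubic graph is automatically a $(1,2)$-cover is correct. However, the reduction step, which you yourself flag as the main obstacle, has a genuine gap. The ``rewriting rules'' $\Pi(F_0+(F_0+X))\subseteq\Pi(F_0+F_0)$ and $\Pi((F_0+F_0)+(F_0+X))\subseteq\Pi((F_0+F_0)+F_0)$ for \emph{arbitrary} sums $X$ do not follow from Lemma~\ref{patternsets}, which treats only $X=F_0$. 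The natural way to derive them --- the replacement principle that $\Pi(A)\subseteq\Pi(B)$ implies $\Pi(C+A)\subseteq\Pi(C+B)$ --- would require $\Pi(F_0+X)\subseteq\Pi(F_0)$, and this fails already for $X=F_0$: the pattern \patt{A}{BC}{A}{A}{D} lies in $\Pi(F_0+F_0)$ but not in $\Pi(F_0)$, even up to the listed symmetries. Likewise the uniform bound ``$\Pi(S)\subseteq\Pi(F_0+F_0)$ for every sum $S$'' that would make your induction close is false, since $\Pi((F_0+F_0)+F_0)\not\subseteq\Pi(F_0+F_0)$ (e.g.\ \patt{A}{AB}{A}{AC}{AD}). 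Moreover, because $T_0$ may branch, a general sum has the form $Y_1+Y_2$ with both $Y_i$ compound, so it cannot be produced by peeling single $F_0$'s off one end, and your proposed induction hypothesis about ``the last summand'' does not cover it. Finally, the terminal finite check (no pattern of the reduced $\Pi(S)$ is fusion-compatible with $\Pi(F_0)$ after the reversal $i\mapsto 6-i$) is asserted but never carried out.

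The paper closes these gaps by a different bookkeeping: it inducts on $|V(T_0)|$ and operates at the \emph{deepest} part of the tree, the end of a longest path of degree-$3$ vertices, where only the three local configurations of Figure~\ref{treereduc} can occur. Cases (a) and (c) are resolved by replacing $(F_0+F_0)+(F_0+F_0)$ by $(F_0+F_0)+F_0$, respectively $F_0+(F_0+F_0)$ by $F_0+F_0$ --- exactly the two inclusions of Lemma~\ref{patternsets}, applied verbatim --- yielding a smaller treelike snark that still carries a $4$-perfect-matching cover; case (b) is converted into case (c) by re-bracketing the global sum decomposition (Figure~\ref{shift}), which is precisely the non-associativity issue you identify. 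The base case $T_0=K_{1,3}$ is then cited from H\"{a}gglund and Esperet--Mazzuoccolo, which takes the place of your final fusion-compatibility check. To salvage your version you would need either to compute further pattern sets or to restructure the reduction so that every application of Lemma~\ref{patternsets} is to the literal fragments it mentions --- which is, in effect, the paper's induction.
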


\begin{proof}
Let $G(T_0)$ be a treelike snark with underlying tree $T_0$. We proceed by induction on the order of $T_0$.
If $T_0 = K_{1,3}$, then by \cite[Sec. 3]{Ha12} and \cite[Theorem 6]{EM14}, the treelike snark $G(T_0)$ has excessive index $5$.

Suppose the statement is true for all treelike snarks with $|V(T_0)| <
n$, where $n > 4$.

Suppose further that the underlying tree $T_0$ has $n$ vertices, and
let $U = u_1,u_2, \ldots, u_t$ be a longest path of vertices of degree
$3$ in $T_0$. Then the part of $T_0$ around the endvertex $u_1$ of $U$
coincides with one of the possibilities in Figure \ref{treereduc}.

\begin{figure}[h]
	\centering
	\begin{subfigure}[h]{3cm}
		\centering
		\includegraphics[scale=0.35]{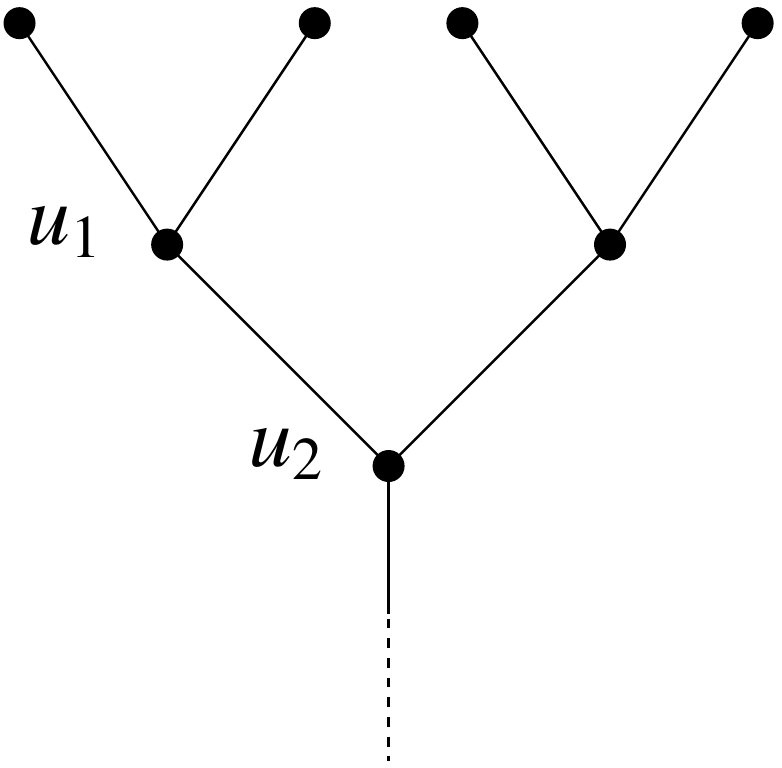}
		\caption{}\label{treereduc-a}		
	\end{subfigure}
    \quad
	\begin{subfigure}[h]{4cm}
		\centering
		\includegraphics[scale=0.35]{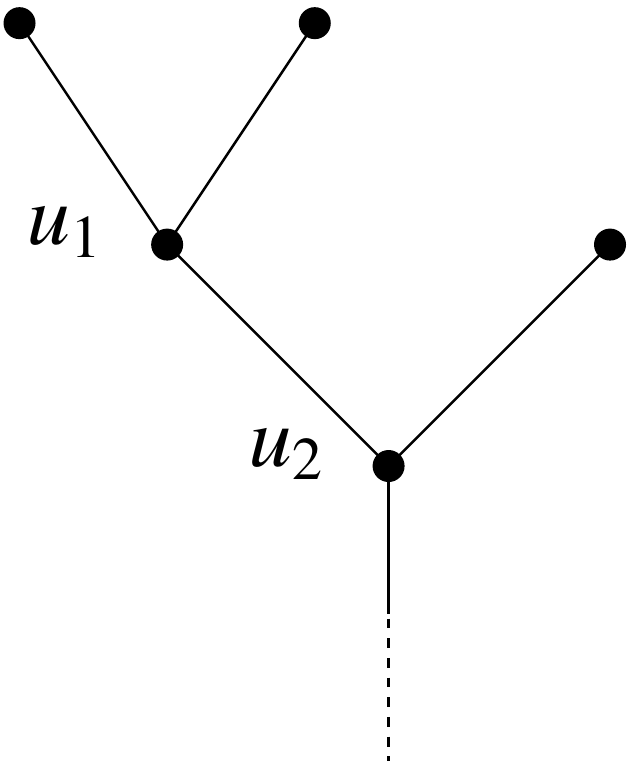}
		\caption{}\label{treereduc-b}
	\end{subfigure}
   \quad
	\begin{subfigure}[h]{4cm}
		\centering
		\includegraphics[scale=0.35]{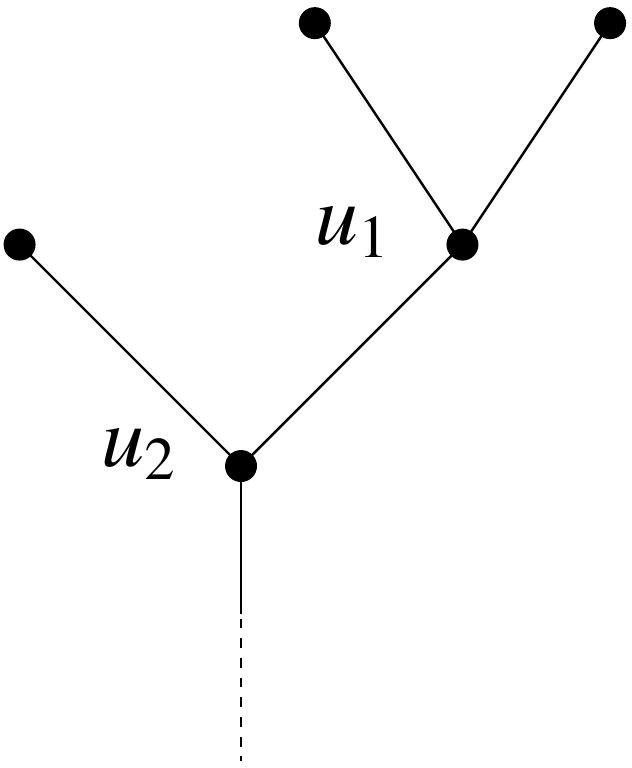}
		\caption{}\label{treereduc-c}
	\end{subfigure}
	\caption{Possible cases in the proof of
          Theorem~\ref{main1}. Any vertex represented as a leaf is a
          leaf of $T_0$.}\label{treereduc}
\end{figure}

We show that case (a) reduces to case (b). Replace the fragment
$(F_0+F_0)+(F_0+F_0)$ corresponding to the part of $T_0$ shown in
Figure~\ref{treereduc}(a) by $(F_0+F_0)+F_0$, obtaining a cubic graph
$G'$. Suppose that the excessive index of $G(T_0)$ is less than or
equal to $4$. Then it is, in fact, equal to $4$ (as we know that
$G(T_0)$ is a snark). Consider a $(1,2)$-cover of $G(T_0)$ by $4$
perfect matchings. By Lemma~\ref{patternsets}(ii), the pattern induced
on the loose half-edges of $(F_0+F_0)+(F_0+F_0)$ can be extended to a
$(1,2)$-cover by $4$ perfect matchings of $(F_0+F_0)+F_0$, and
hence to that of $G'$. This contradicts the induction hypothesis, as
the latter implies that the excessive index of $G'$ is greater than
$4$. Consequently, the excessive index of $G(T_0)$ is greater than
$4$.

\begin{figure}
	\centering
		\includegraphics[height=7cm]{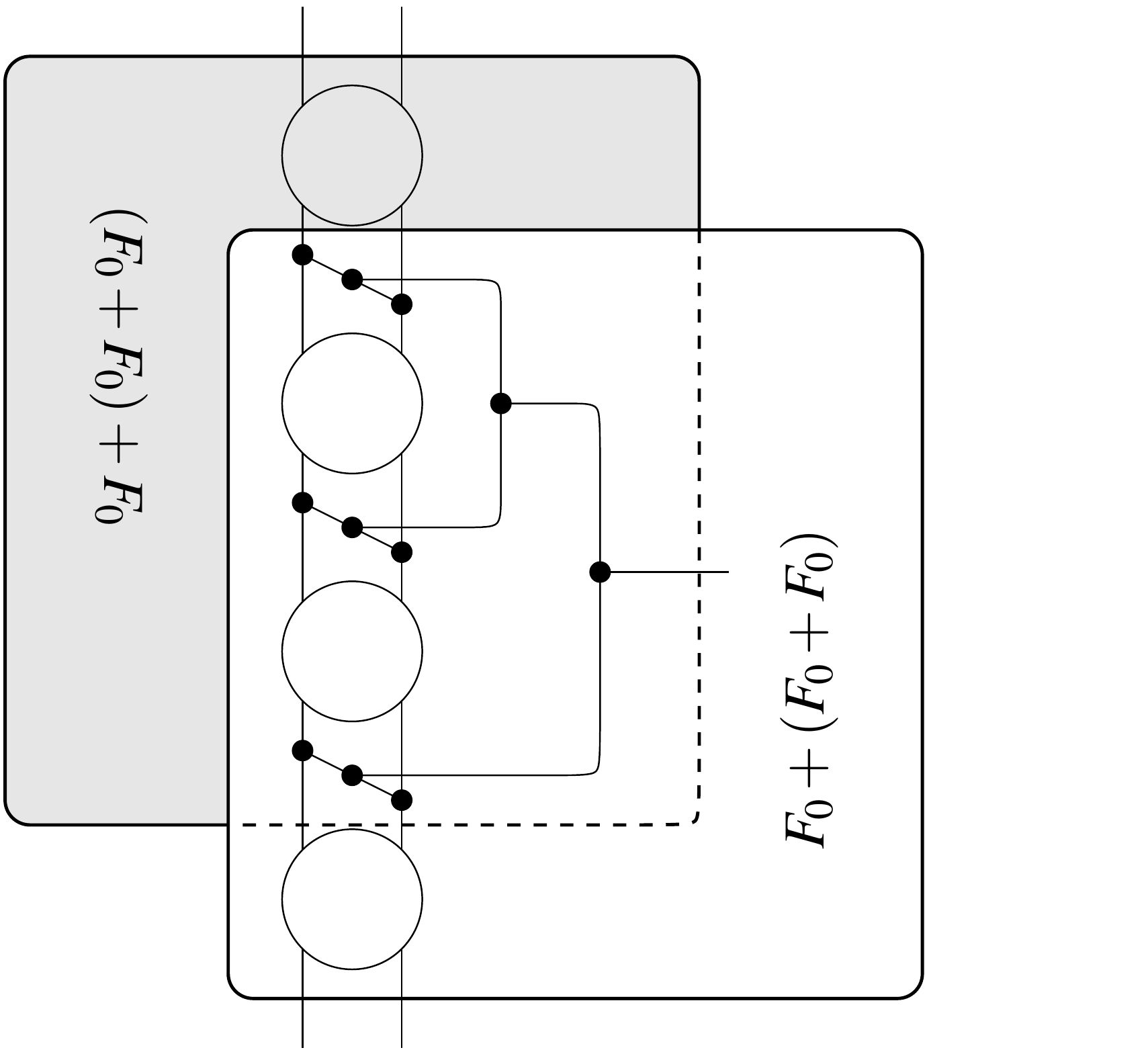}
		\caption{Moving from case (b) to case (c) in the proof
                of Theorem~\ref{main1}.}\label{shift}		
\end{figure}

In a similar way, we can reduce case (c) using
Lemma~\ref{patternsets}(i). Although the lemma implies no direct
reduction for case (b), Figure~\ref{shift} shows that by expressing
$G(T_0)$ as a sum (and fusion) of fragments in a different way, case
(b) is transformed into case (c), which is reduced as before. The
proof is thus complete.
\end{proof}

\section{Circular flow number}\label{cfntls}

We recall the notion of circular nowhere-zero $r$-flow, first
introduced in \cite{GTZ}. Let $G=(V,E)$ be a graph.

Given a real number $r\geq 2$, a {\em circular nowhere-zero $r$-flow}
($r$-CNZF for short) in $G$ is an assignment $f:\,E \rightarrow
[1,r-1]$ and an orientation $D$ of $G$, such that $f$ is a flow in
$D$. That is, for every vertex $x\in V$, $\sum_{e\in
  E^+(x)}f(e)=\sum_{e\in E^-(x)}f(e)$, where $E^+(x)$, respectively
$E^-(x)$, are the sets of edges directed from, respectively toward,
$x$ in $D$.

The {\em circular flow number} $\phi_c(G)$ of $G$ is the infimum of
the set of numbers $r$ for which $G$ admits an $r$-CNZF. If $G$ has a
bridge then no $r$-CNZF exists for any $r$, and we define
$\phi_c(G)=\infty$.

A circular nowhere-zero {\em modular}-$r$-flow ($r$-MCNZF), is an analogue of an $r$-CNZF,
where the additive group of real numbers is replaced by $\mathbb{R}/r\mathbb{Z}$.
We would like to stress that, given an $r$-MCNZF $f$, the direction of an edge $e$ can be always reversed and $f$ transformed into another $r$-MCNZF, where $f(e) \in \mathbb{R}/r\mathbb{Z}$ is replaced by $-f(e) \in \mathbb{R}/r\mathbb{Z}$.

The following result is well-known and implicitly proved also in Tutte's original work on integer flows \cite{Tu54}.

\begin{proposition}[\cite{Tu54}]\label{mod}
The existence of a circular nowhere-zero $r$-flow in a graph $G$
is equivalent to that of an $r$-MCNZF.
\end{proposition}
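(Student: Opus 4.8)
The plan is to prove Proposition~\ref{mod} by showing both directions of the equivalence, the forward one being essentially trivial and the backward one requiring the real work.

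\textbf{From CNZF to MCNZF.} Suppose $f$ together with an orientation $D$ is a circular nowhere-zero $r$-flow in $G$. Reduce every value $f(e) \in [1, r-1]$ modulo $r$; since $f$ satisfies Kirchhoff's law at each vertex over $\mathbb{R}$, the reduced assignment satisfies it over $\mathbb{R}/r\mathbb{Z}$. Moreover, if $f(e) \in [1,r-1]$ then $f(e) \not\equiv 0 \pmod r$, so the modular flow is nowhere-zero. Hence $f$ is an $r$-MCNZF with the same orientation.

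\textbf{From MCNZF to CNZF.} This is the substantive direction. Let $g$ be an $r$-MCNZF with orientation $D$. For each edge $e$, choose a representative $\tilde g(e) \in (0, r)$ of $g(e) \in \mathbb{R}/r\mathbb{Z}$ (possible since $g(e) \neq 0$); note $\tilde g$ need not be a real flow, but at each vertex $x$ the deficiency $\partial(x) := \sum_{e \in E^+(x)} \tilde g(e) - \sum_{e \in E^-(x)} \tilde g(e)$ is an integer multiple of $r$, say $\partial(x) = r \cdot m(x)$, because $g$ was a flow modulo $r$. The idea is to correct these deficiencies by repeatedly reversing edges: reversing the orientation of an edge $e$ at $x$ and replacing $\tilde g(e)$ by $r - \tilde g(e)$ keeps the value in the open interval $(0,r)$, preserves the modular flow, and changes $\partial(x)$ (and the deficiency at the other endpoint of $e$) by $\pm r$. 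Concretely, I would argue as follows: among all choices of representatives and orientations yielding the same $g$, pick one minimizing $\sum_{x} |m(x)|$ (equivalently, minimizing $\sum_e \tilde g(e)$, the total ``weight'', which is a cleaner monovariant). If some vertex $x$ has $m(x) > 0$, then the net out-flow exceeds the net in-flow, so there is an out-edge $e$ at $x$ with $\tilde g(e) \ge r/\deg(x) \cdot$ (something positive) — in any case some out-edge exists, and reversing it strictly decreases the total weight $\sum_e \tilde g(e)$ by $2\tilde g(e) - r$\ldots but this may be negative, so the naive monovariant needs care. The robust formulation is: minimize $\sum_x |m(x)|$; if $x$ has $m(x) \neq 0$, follow a directed path (with respect to the ``surplus'' direction) until reaching a vertex with opposite-sign deficiency — such a vertex exists because $\sum_x m(x) = 0$ (the deficiencies telescope over all edges) — and reverse every edge along this path, which decreases $|m|$ at both endpoints by $1$ and leaves it unchanged at interior vertices, strictly decreasing $\sum_x |m(x)|$. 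Iterating, we reach $m \equiv 0$, i.e.\ an honest real nowhere-zero flow with all values in $(0, r)$; a final observation (pushing values to the closed sub-interval $[1, r-1]$, or simply noting that $\phi_c$ is defined as an infimum so open versus closed intervals give the same number) completes the proof.

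\textbf{Main obstacle.} The delicate point is the second direction: choosing the right monovariant so that the edge-reversal procedure terminates. Minimizing the total weight $\sum_e \tilde g(e)$ directly does not obviously work because a single reversal can increase it; the fix is to minimize $\sum_x |m(x)|$ and to reverse along an entire alternating-surplus path at once (guaranteed to connect a positive-deficiency vertex to a negative-deficiency one since the deficiencies sum to zero), which strictly decreases this integer quantity. One must also check the boundary issue that $\tilde g(e)$ stays strictly between $0$ and $r$ after each reversal — immediate since $\tilde g(e) \in (0,r)$ implies $r - \tilde g(e) \in (0,r)$ — and reconcile the half-open interval $[1, r-1]$ in the definition of $r$-CNZF with the open interval arising here, which is harmless because $\phi_c$ is an infimum (equivalently, by a standard scaling/perturbation argument, any flow with values in $(0,r)$ yields one with values in $[1, r-1]$ after possibly replacing $r$ by $r + \varepsilon$, and the infimum is unaffected).
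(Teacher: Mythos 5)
The paper does not actually prove Proposition~\ref{mod}; it cites it as ``well-known and implicitly proved'' in Tutte's work, so there is no in-paper argument to compare against. Your outline is the standard route (fix real representatives, measure the integer deficiencies $m(x)$, and repair them by reversing edges), and the skeleton is sound, but two steps as written do not hold up.

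First, the existence of the directed path: you assert that from a vertex $x$ with $m(x)>0$ one can ``follow a directed path until reaching a vertex with opposite-sign deficiency --- such a vertex exists because $\sum_x m(x)=0$.'' The zero-sum of the deficiencies is not sufficient: it does not by itself prevent all negative-deficiency vertices from being unreachable from $x$ along directed edges. What saves you is the positivity of the representatives. Let $S$ be the set of vertices reachable from $x$ by directed paths; then no edge leaves $S$, so $\sum_{v\in S}\partial(v)$ equals minus the total $\tilde g$-value entering $S$, which is $\le 0$ since every $\tilde g(e)>0$; if every $v\in S$ had $m(v)\ge 0$ this sum would be $\ge r\,m(x)>0$, a contradiction. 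This cut argument (not the global zero-sum) is what guarantees a reachable deficit vertex, and it must be stated; the rest of the path-reversal bookkeeping ($m$ drops by one at each end, telescopes at interior vertices, so $\sum_x|m(x)|$ strictly decreases) is correct.

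Second, the interval bookkeeping at the end is both unnecessary and, as written, invalid. The paper's $r$-MCNZF is the analogue of an $r$-CNZF with $\mathbb{R}$ replaced by $\mathbb{R}/r\mathbb{Z}$, so its values already lie in $[1,r-1]$; you should take the (unique) representatives $\tilde g(e)\in[1,r-1]$ rather than merely in $(0,r)$. Since the reversal map $t\mapsto r-t$ preserves $[1,r-1]$, every iterate keeps all values in $[1,r-1]$ and the terminal real flow is literally an $r$-CNZF --- no final adjustment is needed. By contrast, your proposed patch via the infimum in the definition of $\phi_c$ does not prove the stated proposition, which is an equivalence for a \emph{fixed} $r$: a real flow with values in $(0,r)$ only yields $\phi_c(G)\le r'$ for some $r'$ possibly larger than $r$ after rescaling, whereas the claim is the existence of an $r$-CNZF for that same $r$. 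Replacing $(0,r)$ by $[1,r-1]$ throughout removes the problem entirely.
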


The outstanding $5$-Flow Conjecture is equivalent to the statement
that the circular flow number of no bridgeless graph is greater than
$5$. In \cite{EMT15}, the authors present some general methods for
constructing graphs (in particular snarks) with circular flow number
at least $5$. By a direct application of the main results in
\cite{EMT15}, as we have mentioned in the Introduction, one can deduce
that all (few) known snarks with excessive index $5$ have circular
flow number at least $5$. In other words, if a snark is ``critical''
with respect to Berge-Fulkerson's Conjecture, then it seems to be
critical also for the $5$-flow Conjecture. The converse is false, as
shown by the snark $G$ of order $28$, found by M\'{a}\v{c}ajov\'{a}
and Raspaud~\cite{MaRa06}: it has $\phi_C(G)=5$ and
$\chi'_e(G)=4$. 

In this section, we furnish a further element in the direction of the previous observation, by proving that also all treelike snarks have circular flow number at least $5$ (cf. Theorem \ref{main2}).

In order to prove the main theorem of this section, we need to briefly
recall some notions and results introduced in \cite{EMT15}: we will
not present them in the most general setting, but just as needed for
the purpose of this paper. For a general presentation, we refer the
interested reader to the original paper.

First of all, as a direct consequence of Proposition~\ref{mod} we have
the following:
\begin{proposition}[\cite{EMT15}]\label{modular}
For any graph $G$, $\phi_c(G)<5$ if and only if there exists an
$5$-MCNZF $f$ in $G$, such that $f:E\rightarrow(1,4)$.
\end{proposition}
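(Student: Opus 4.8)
The plan is to deduce the proposition from Proposition~\ref{mod} (and the definition of $\phi_c$) by a rescaling argument, treating the two implications separately. Before starting one should dispose of the degenerate case: if $G$ has a bridge, then no modular flow can be nonzero on that bridge and $\phi_c(G)=\infty$, so both sides of the equivalence fail; if $G$ is bridgeless but disconnected one argues componentwise. Hence one may assume $G$ is connected and bridgeless.

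For the implication $\phi_c(G)<5 \Rightarrow$ existence of the flow, I would first record the elementary fact that the set $\{r:\ G\text{ admits an }r\text{-CNZF}\}$ is upward closed (enlarging $r$ only enlarges the admissible range $[1,r-1]$), so there is an actual $r<5$ for which $G$ has an $r$-CNZF, say a flow $g:E\to[1,r-1]$ in an orientation $D$. Choosing $\mu>1$ small enough that $\mu(r-1)<4$ (possible since $r-1<4$) and replacing $g$ by $\mu g$, which is still a flow in $D$, one obtains a real-valued flow all of whose values lie in the open interval $(1,4)$. Reading this flow modulo $5$ gives a $5$-MCNZF $f$ with $f:E\to(1,4)$, since values in $(1,4)$ are nonzero in $\mathbb{R}/5\mathbb{Z}$.

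For the reverse implication, suppose $f:E\to(1,4)$ is a $5$-MCNZF with orientation $D$. Since $E$ is finite there is $\varepsilon>0$ with $f(e)\in[1+\varepsilon,\,4-\varepsilon]$ for every edge $e$. Set $r:=5/(1+\varepsilon)<5$ and $c:=1/(1+\varepsilon)=r/5$. Multiplication by $c$ is a group isomorphism $\mathbb{R}/5\mathbb{Z}\to\mathbb{R}/r\mathbb{Z}$ sending $0$ to $0$, so $cf$ is a circular nowhere-zero modular $r$-flow in $D$; and the identities $c(1+\varepsilon)=1$ and $c(4-\varepsilon)=r-1$ show that $cf(e)\in[1,r-1]$ for every $e$, i.e.\ $cf$ is an $r$-MCNZF. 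By Proposition~\ref{mod}, $G$ then admits an $r$-CNZF, and therefore $\phi_c(G)\le r<5$.

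The genuinely substantial content — namely, upgrading a modular flow to an integer/real flow with values in the prescribed range — is entirely packaged inside Proposition~\ref{mod}, so the only real work here is the arithmetic bookkeeping of the rescaling constants and care with open versus closed intervals; in particular, the open interval $(1,4)$ in the statement is exactly what converts $\phi_c(G)\le5$ into the strict inequality $\phi_c(G)<5$. I do not expect a serious obstacle, only the need to be precise about which representatives of elements of $\mathbb{R}/5\mathbb{Z}$ (respectively $\mathbb{R}/r\mathbb{Z}$) are being used when the interval conditions are checked.
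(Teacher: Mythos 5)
Your proof is correct and takes the same route the paper intends: the paper states this proposition without proof, citing \cite{EMT15} and calling it ``a direct consequence of Proposition~\ref{mod}'', and your two rescaling arguments (dilating an $r$-CNZF with $r<5$ into the open window $(1,4)$ mod $5$, and compressing a $5$-MCNZF with values in $(1,4)$ by the factor $r/5=1/(1+\varepsilon)$ into an $r$-MCNZF with $r<5$) are exactly the standard way to make that deduction explicit. The arithmetic checks out, in particular the identity $c(4-\varepsilon)=r-1$, and your handling of the bridge/degenerate case is fine.
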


The notion of a $2$-pole is crucial in this setting. A {\em $2$-pole}
$G_{u,v}$ consists of a graph $G$ and two of its vertices, $u$ and
$v$. The vertices $u$ and $v$ are the {\em terminals} of $G_{u,v}$.
The {\em open $5$-capacity} $\mathit{CP}_5(G_{u,v})$ of $G_{u,v}$ is
a subset of $\mathbb{R}/5\mathbb{Z}$, defined by adding to $G$ a new
edge $e$ joining $u$ to $v$, and setting
\begin{equation*}
  \mathit{CP}_5(G_{u,v})=\{f(e)\,|\,f \mbox{ is a modular flow in }G\cup e \mbox{ and } f:\,E(G)\rightarrow (1,4)\}.
\end{equation*}
The following properties hold (see \cite{EMT15}):
\begin{enumerate}[(i)]
 \item The open $5$-capacity of a single edge $[u,v]$ is the open interval $(1,4)$.
 \item The open $5$-capacity of $P^-_{u,v}$ (where $P^-$ is the Petersen graph minus an edge $uv$) is the interval $(4,1)$ in $\mathbb{R}/5\mathbb{Z}$.
\end{enumerate}

\begin{figure}[h]
	\centering
		\includegraphics[height=2.5cm]{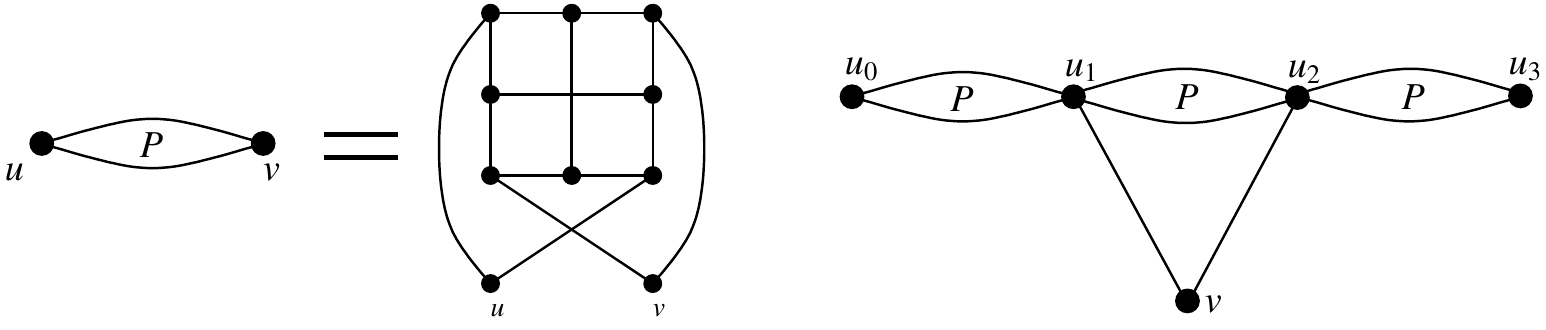}
		\caption{The $2$-pole $P^-_{u,v}$ and the subgraph described in Lemma \ref{flow5}.}\label{2pole}		
\end{figure}

Now we describe a forbidden configuration for a graph whose circular
flow number is less than $5$.

\begin{lemma}\label{flow5}
Let $(u_0,u_1,u_2,u_3)$ be a path in a graph $G$, along vertices of degree $3$ such that there exists a vertex $v$ adjacent both to $u_1$ and $u_2$. Let $G'$ be the graph obtained from $G$ by substituting each of the three edges $[u_i,u_{i+1}]$ for $i=0,1,2$ with a $2$-pole $P^-_{u_i,u_{i+1}}$. Then $\phi_c(G')\geq 5$.
\end{lemma}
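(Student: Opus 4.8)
The plan is to argue by contradiction, assuming $\phi_c(G')<5$ and deriving a contradiction from the structure of the open $5$-capacities of the $P^-$ gadgets together with the Parity/flow constraints at the vertices $u_1,u_2,v$. By Proposition~\ref{modular}, if $\phi_c(G')<5$ then $G'$ carries a $5$-MCNZF $f$ with values in $(1,4)$. The key fact I would invoke is property~(ii) of open $5$-capacities: inside each $P^-_{u_i,u_{i+1}}$ the flow forces the ``effective value'' transmitted between $u_i$ and $u_{i+1}$ to lie in the arc $(4,1)\subseteq\mathbb{R}/5\mathbb{Z}$, that is, in $(4,5)\cup(0,1)$ modulo $5$; equivalently, if we orient each gadget from $u_i$ to $u_{i+1}$ and write $x_i$ for the net flow it carries, then $x_i\in(4,1)$ (mod $5$), i.e. $x_i \in (-1,1)\setminus\{0\}$ after shifting representatives. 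I would also use the trivial fact that all other edges incident to $u_1$ and $u_2$ (namely the edge to $v$, and for $u_1,u_2$ only those; $u_0$ and $u_3$ have their third edge going off elsewhere) carry values in $(1,4)$.

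First I would set up notation carefully: orient the three gadgets consistently along the path, say each $P^-_{u_i,u_{i+1}}$ contributes net flow $x_i$ out of $u_i$ into $u_{i+1}$, with $x_i\in(4,1)$ in $\mathbb{R}/5\mathbb{Z}$. Let $y_1$ be the flow on the edge $[u_1,v]$ oriented from $u_1$ to $v$, and $y_2$ the flow on $[u_2,v]$ oriented from $v$ to $u_2$; both are in $(1,4)$. The conservation law at $u_1$ reads $x_0 = x_1 + y_1$ in $\mathbb{R}/5\mathbb{Z}$, and at $u_2$ it reads $x_1 + y_2 = x_2$. Adding these gives $x_0 + y_2 = x_2 + y_1$, hence $y_1 - y_2 = x_0 - x_2$. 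Now $x_0 - x_2$ is a difference of two elements of the narrow arc $(4,1)$, so it lies in a small arc around $0$ — precisely in $(3,2)$ (mod $5$), which as a set of representatives is $(-2,2)$. On the other hand, conservation at $v$ (degree $3$, third edge carrying some value $z\in(1,4)$) gives $y_1 = y_2 + z$ or $y_1 + z = y_2$ depending on orientation of the third edge at $v$, so $y_1 - y_2 = \pm z$ with $z\in(1,4)$. Thus $y_1-y_2\in(1,4)\cup(-4,-1) = (1,4)\cup(1,4)$ (mod $5$, since $-4\equiv1$ and $-1\equiv4$), i.e. $y_1-y_2\in(1,4)$ as a subset of $\mathbb{R}/5\mathbb{Z}$.

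The contradiction then comes from comparing the two constraints on $y_1-y_2$: from the path we get $y_1-y_2 \in (3,2)$ (a neighbourhood of $0$, avoiding the ``far'' arc), while from vertex $v$ we get $y_1-y_2\in(1,4)$ (a neighbourhood of the antipode, avoiding $0$). The intersection $(3,2)\cap(1,4)$ in $\mathbb{R}/5\mathbb{Z}$ is empty, so no such $f$ exists and $\phi_c(G')\ge5$. I would present the arcs explicitly with representatives to make the disjointness transparent, being careful that ``$(4,1)$'' in $\mathbb{R}/5\mathbb{Z}$ means the arc of length $2$ through $0$, not through $2.5$.

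The main obstacle, and the place where I would spend the most care, is handling the modular arithmetic cleanly: differences and sums of arcs in $\mathbb{R}/5\mathbb{Z}$ must be computed as Minkowski sums of arcs on the circle, and one must track exactly which arc of length $2$ or length $3$ results, since an off-by-$\pi$ error would collapse the argument. A secondary subtlety is justifying that the ``net flow through a $P^-$ gadget'' is a well-defined element of its open $5$-capacity and that conservation at $u_1,u_2$ can be written purely in terms of these net flows and the edges to $v$ — this is exactly the composition property of $5$-capacities from \cite{EMT15}, which I would cite rather than reprove. There is also a small case analysis according to the orientations of the edge incident with $v$ and of the two pendant gadget-edges at $u_0$ and $u_3$, but by the remark after Proposition~\ref{mod} we may reverse any edge and negate its value freely, so this reduces to at most the two essentially different orientations of the $[v,\cdot]$ third edge, both of which land in the same empty intersection.
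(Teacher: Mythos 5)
Your setup is fine (passing to a modular flow on $G$ with values in $(4,1)$ on the three path edges and $(1,4)$ elsewhere, and freely reorienting edges), but the final step contains a fatal error: the two arcs you intersect are \emph{not} disjoint. The arc $(3,2)$ in $\mathbb{R}/5\mathbb{Z}$ (your $(-2,2)$) has length $4$ and the arc $(1,4)$ has length $3$; two arcs of total length $7>5$ on a circle of circumference $5$ must overlap, and indeed $(3,2)\cap(1,4)=(1,2)\cup(3,4)\neq\emptyset$. For example $y_1-y_2=1.5$ satisfies both of your constraints, so no contradiction follows. The deeper problem is not just arithmetic: by adding the conservation equations at $u_1$ and $u_2$ you eliminate $x_1$, and with it the constraint $x_1\in(4,1)$ on the \emph{middle} gadget, which is exactly the information the argument needs. (You also never use $y_1,y_2\in(1,4)$, but that constraint alone does not rescue the computation.)

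The paper's proof keeps the two vertex equations separate. From $x_0=x_1+y_1$ with $y_1\in(1,4)$ and $x_0,x_1\in(4,1)$ one deduces that consecutive path values cannot lie in the same unit interval, so the three values $x_0,x_1,x_2$ alternate between $(4,0)$ and $(0,1)$; this forces the flow values on $[v,u_1]$ and $[v,u_2]$ (oriented both away from $v$) to lie one in $(1,2)$ and the other in $(3,4)$. Their \emph{sum}, which by conservation at $v$ equals the value on the third edge $e$ at $v$, then lies in $(4,6)=(4,1)$, contradicting $f(e)\in(1,4)$. In your orientation this corresponds to a signed combination of $y_1$ and $y_2$ that you cannot control once you have only bounded their difference via $x_0-x_2$. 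To repair your argument you would need to reinstate the middle-edge constraint and localise $y_1$ and $y_2$ individually (to the subintervals $(1,2)$ and $(3,4)$, up to sign), not merely their difference.
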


\begin{proof}
For the sake of a contradiction, assume $\phi_c(G')<5$. By Proposition \ref{modular}, $G'$ admits a $5$-MCNZF $f'$, such that $f':E\rightarrow(1,4)$. The flow $f'$ of $G'$ induces a flow $f$ on the original graph $G$ with values in $(1,4)$ for all edges but $[u_i,u_{i+1}]$ which have values in $(4,1)$ (since the corresponding $2$-pole has open $5$-capacity $(4,1)$ as previously remarked). Conversely, any such flow of $G$ corresponds to a flow of $G'$ with values in $(1,4)$. Hence, we have to prove that $G$ cannot have a flow as the one described above.
Assume  $(u_0,u_1,u_2,u_3)$ is a directed path in $G$ from $u_0$ to $u_3$. If this is not the case, we can reverse some edge of the path for obtaining a new valid flow of $G$. For the same reason, we can assume $[v,u_1],[v,u_2]$ are directed from $v$ and the third edge in $v$, say $e$, is directed towards $v$ (recall that both $(1,4)$ and $(4,1)$ are symmetric subsets of $\mathbb{R}/5\mathbb{Z}$). Since the flow values of $[v,u_1],[v,u_2]$ belong to $(1,4)$, we cannot have values of $f$ on two consecutive edges of the path $(u_0,u_1,u_2,u_3)$ in the same unit interval: hence, the values of $f$ along the path $(u_0,u_1,u_2,u_3)$ are alternating between the two intervals $(4,0)$ and $(0,1)$. Furthermore, the values of $f$ on $[v,u_1]$ and $[v,u_2]$ are one in $(1,2)$ and the other in $(3,4)$. Finally, the value of $f$ on $e$ is the sum of the values on $[v,u_1]$ and $[v,u_2]$ and so it is in $(4,1)$, that is a contradiction since $e$ has open $5$-capacity $(1,4)$.
\end{proof}

Note that the graph $G'$ constructed in Lemma \ref{flow5} is not cubic since some vertices have degree more than $3$. More precisely, all vertices $u_i$ have degree $5$ in $G'$.
However, it is well known that the expansion of a vertex $x$ to a
subgraph $X$ (see Figure \ref{expansion}) does not decrease the
circular flow number of a graph, since each flow in the expansion can
be naturally reduced to a flow of the original graph. Thus, by
performing a series of expansions, we can transform $G'$ into a cubic
graph without decreasing the circular flow number.

\begin{figure}[h]
	\centering
		\includegraphics[height=3.5cm]{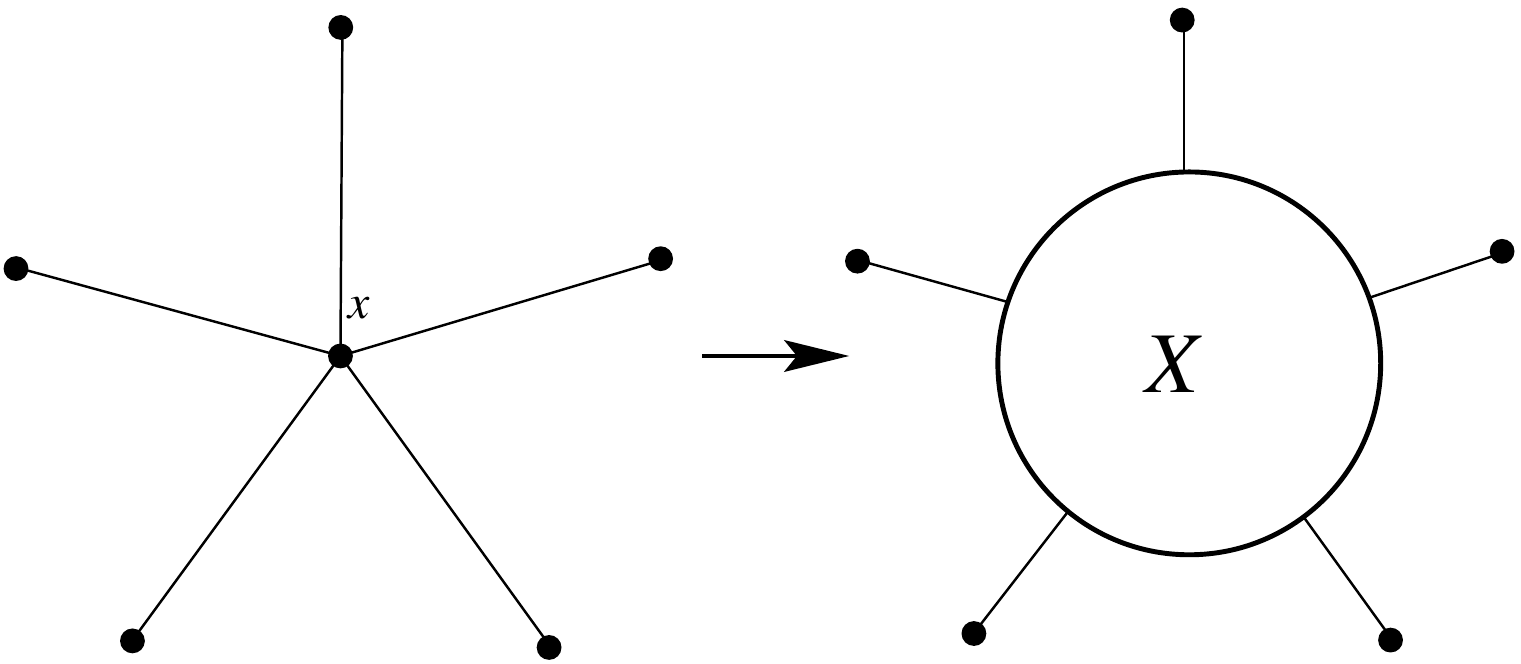}
		\caption{Expansion of a vertex $x$ to an arbitrary subgraph $X$.}\label{expansion}		
\end{figure}

By choosing the starting graph and the expansions in a suitable way,
we obtain the following result for treelike snarks:
\begin{theorem}\label{main2}
Treelike snarks have circular flow number at least $5$.
\end{theorem}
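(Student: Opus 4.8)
The plan is to exhibit, inside every treelike snark $G(T_0)$, the forbidden configuration of Lemma~\ref{flow5}, so that $\phi_c(G(T_0))\ge 5$ follows immediately (modulo the vertex expansions discussed after the lemma, which never decrease $\phi_c$). Recall that Lemma~\ref{flow5} needs a path $(u_0,u_1,u_2,u_3)$ on degree-$3$ vertices together with a vertex $v$ adjacent to both $u_1$ and $u_2$, such that $G(T_0)$ is obtained from some graph $G$ by replacing each of the three path-edges $[u_i,u_{i+1}]$ by a copy of $P^-$ (the Petersen graph minus an edge). So the task reduces to: locate inside $G(T_0)$ three edge-disjoint copies of $P^-$ arranged in the required ``path-with-a-chord'' pattern, and check that contracting each $P^-$ back to a single edge leaves a legitimate graph $G$.

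The key observation is that the Petersen fragment $F_0$ is built from the Petersen graph by deleting a vertex and subdividing, so each $F^\ell_0$ already \emph{contains} a copy of $P^-$: namely, delete the vertex $x$ from Petersen to get a graph with three degree-$2$ vertices $y,z,t$; suppressing the two subdivision vertices on the $y$-side, the fragment $F^\ell_0$ minus the central spoke $a_3$ is essentially $P^-$ with its two ``free'' ends being (the edges carrying) the right spokes $a_1,a_2$ and the left spokes $a_4,a_5$ — more precisely $P^-_{u,v}$ shows up with terminals identified along the spoke-pairs. Concretely I would pick three consecutive leaves $\ell_0,\ell_1,\ell_2$ of $T_0$ whose fragments $F^{\ell_0}_0, F^{\ell_1}_0, F^{\ell_2}_0$ are glued in a chain along the rim $C_0$ (this is always possible: $C_0$ has at least three leaves since $T_0$ has no degree-$2$ vertices and $|V(T_0)|\ge 4$). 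The rim-edges joining $a_4,a_5$ of one fragment to $a_2,a_1$ of the next, together with the central spokes running up into the tree, supply exactly the backbone path; the internal structure of each $F^{\ell_i}_0$ supplies the three $P^-$'s; and the vertex $v$ with two chords to $u_1,u_2$ is furnished by a carefully chosen vertex of the middle fragment (for instance the vertex where the special path meets the rest, exploiting property (iii) of $F_0$).

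Carrying this out, the steps in order are: (1) fix notation for the chain of three fragments along $C_0$ and identify, in Petersen $\setminus$ edge terms, which induced subgraph of each $F^{\ell_i}_0$ is the relevant $P^-$ and which two vertices are its terminals $u_i,u_{i+1}$; (2) verify that the three copies of $P^-$ are pairwise edge-disjoint and that no two share an $x$-type vertex, so the ``un-expansion'' (contracting each $P^-$ to its terminal edge) yields a well-defined simple graph $G$ with $(u_0,u_1,u_2,u_3)$ a genuine path on degree-$3$ vertices; (3) identify the vertex $v$ and the two edges $[v,u_1],[v,u_2]$ inside $G$, i.e.\ after contraction — these come from the central-spoke edges and the tree $T_0$ (the two middle central spokes meet a common vertex of $T_0$, or two adjacent vertices, so the chord structure is realized in the contracted graph, perhaps after one more trivial expansion); (4) invoke Lemma~\ref{flow5} to conclude $\phi_c(G')\ge 5$ where $G'$ is $G$ with the three edges re-expanded to $P^-$'s, note $G'=G(T_0)$ up to the degree-reducing expansions described after Lemma~\ref{flow5}, and conclude.

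I expect step (2)–(3) — the bookkeeping that the combinatorial pattern really sits inside $G(T_0)$ with the right degrees and adjacencies — to be the main obstacle. The delicate point is that Lemma~\ref{flow5} demands the \emph{same} vertex $v$ adjacent to the two interior path vertices $u_1,u_2$, and in the treelike construction the natural candidate ``$v$'' lives in the tree $T_0$ while the $u_i$'s sit where central spokes attach; one must choose the three leaves $\ell_0,\ell_1,\ell_2$ (and possibly the root/orientation of $T_0$) so that $\ell_1$ and $\ell_2$ are \emph{siblings} in $T_0$, i.e.\ attached to a common vertex of $T_0$, which plays the role of $v$. Since $T_0$ is a tree on $\ge 4$ vertices with no degree-$2$ vertex, it has an internal vertex with two leaf-children, guaranteeing such a sibling pair exists; but one must then also check that those two leaves are \emph{consecutive on $C_0$} so that their fragments are glued directly, and that the remaining third $P^-$ and the terminal $u_0$ (resp.\ $u_3$) can be found — this compatibility between the tree-adjacency and the rim-order of $H_0$ is exactly what requires care, and is the crux of selecting the expansions ``in a suitable way'' mentioned just before the theorem.
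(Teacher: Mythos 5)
Your strategy is, in substance, the paper's own: recognise that $G(T_0,C_0)$ arises from the Halin graph $H_0$ by substituting a $2$-pole $P^-_{u,v}$ for each rim edge and then expanding vertices, and invoke Lemma~\ref{flow5}. Two corrections are needed to make your plan go through. First, the structural claim that each Petersen fragment $F^\ell_0$ ``already contains a copy of $P^-$'' is not literally true: $F^\ell_0$ is the Petersen graph minus a \emph{vertex} $x$ (with two subdivisions), so it lacks $x$ and two of the three edges of the deleted star, and no subgraph of a single fragment is a $P^-$. The correct identification is that the $P^-$ associated with the rim edge $[\ell,\ell']$ is the Petersen graph underlying $F^\ell_0$ minus the single edge $[x,y]$, where the terminal $y$ is the leaf $\ell$ itself (expanded into the special path carrying $a_1,a_3,a_2$) and the terminal $x$ is resurrected as the corresponding expansion triple of the \emph{next} leaf $\ell'$, its two edges to $z,t$ being exactly the joins $a_4$--$a'_2$ and $a_5$--$a'_1$. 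So each $P^-$ straddles two consecutive fragments; your bookkeeping in steps (1)--(2) must be set up this way, and the paper sidesteps it entirely by arguing top-down (substitute every rim edge of $H_0$, then expand each leaf as in Figure~\ref{expansion2}). Second, the delicate point you flag --- a sibling pair of leaves that is also consecutive on $C_0$ --- is genuine but is settled by the planarity built into the definition of a Halin graph: an internal vertex of $T_0$ whose neighbours are all leaves except one has its leaf-children consecutive in the rim order. Note finally that because the proof of Lemma~\ref{flow5} only uses flow conservation at $u_1$, $u_2$ and $v$, it is harmless that \emph{all} rim edges (not just the three on the backbone) carry the capacity $(4,1)$; this is why the paper can substitute everything at once instead of carefully selecting three edges as you propose. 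With these adjustments your argument is the published one.
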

\begin{proof}
  Let $G(T_0,C_0)$ be a treelike snark; consider the corresponding
  cubic Halin graph $H_0$ consisting of a tree $T_0$ and a circuit
  $C_0$ on its leaves. In $H_0$, substitute each edge of $C_0$ with a
  $2$-pole $P^-_{u,v}$. The resulting graph $H'_0$ contains the
  configuration described in Lemma \ref{flow5}, and therefore its
  circular flow number is greater than or equal to $5$. Expanding the
  terminals of every $2$-pole as depicted in Figure \ref{expansion2},
  we obtain $G(T_0,C_0)$. As argued above, vertex expansions do not
  decrease the circular flow number, so the theorem follows.\end{proof}

\begin{figure}[h]
	\centering
		\includegraphics[height=2cm]{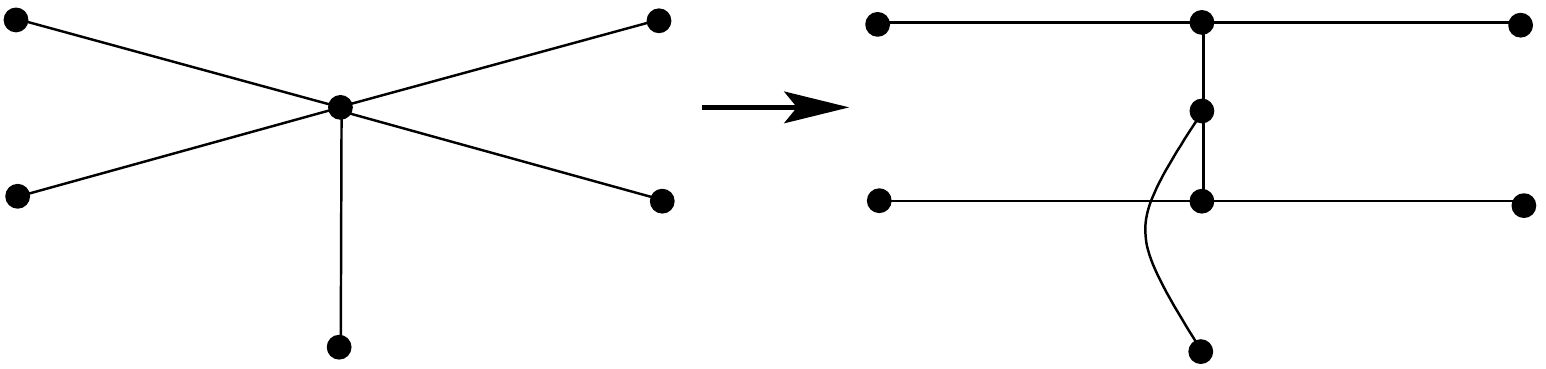}
		\caption{Vertex expansion to obtain treelike snarks.}\label{expansion2}		
\end{figure}

\section{Cycle double covers}\label{cdctls}

In this section, we investigate the properties of treelike snarks with
respect to cycle double covers. Recall that the notion of a
$(1,2)$-cover was defined in Section~\ref{prelim}. A
\emph{$2$-packing} of a cubic graph $G$ a set of joins such that each
edge of $G$ belongs to at most two of the joins.

Hou, Lai and Zhang~\cite{HLZ} have recently proved the following
equivalent formulation of the 5-CDC Conjecture.
\begin{theorem}[\cite{HLZ}, Theorem 3.3] \label{4joins_5CDC}
Let $G$ be a cubic graph. The following statements are equivalent:
\begin{itemize}
\item $G$ has a $5$-cycle double cover,
\item $G$ has a $(1,2)$-cover by $4$ joins.
\end{itemize}
\end{theorem}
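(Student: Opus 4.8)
The plan is to prove both implications at once through a single explicit correspondence between $4$-tuples of joins and $5$-tuples of cycles, using only two facts already in hand: Observation~\ref{cyclevsjoin} (in a cubic graph $G$, a set $J$ is a join if and only if $E(G)\setminus J$ is a cycle) and the elementary remark that, over $\mathrm{GF}(2)$, the set of joins of $G$ is a coset $\mathbf{1}+\mathcal{Z}$ of the cycle space $\mathcal{Z}$. Throughout I identify subgraphs with their edge sets, write $\triangle$ for symmetric difference and $\chi(\cdot)$ for characteristic vectors; recall that a symmetric difference of cycles is a cycle and that, since $G$ is cubic, $E(G)$ itself is never a cycle, so the complement of a cycle is always a genuine (nonempty) join. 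A ``$5$-cycle double cover'' is taken to mean a family of at most five cycles covering every edge exactly twice.

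For the direction ``$5$-CDC $\Rightarrow$ $(1,2)$-cover by $4$ joins'', I would start from cycles $C_1,\dots,C_5$ covering every edge exactly twice, single out one of them, say $C_5$, and set $J_i:=E(G)\setminus(C_i\triangle C_5)$ for $i=1,2,3,4$. Each $C_i\triangle C_5$ is a cycle, so by Observation~\ref{cyclevsjoin} each $J_i$ is a join. To check that $\{J_1,\dots,J_4\}$ is a $(1,2)$-cover, fix an edge $e$ and note that $e\in J_i$ exactly when $e$ lies in both or in neither of $C_i$ and $C_5$. If $e\in C_5$ then $e\in J_i\iff e\in C_i$; since $e$ lies in exactly two of $C_1,\dots,C_5$ and one of these is $C_5$, it lies in exactly one $C_i$ with $i\le 4$, hence in exactly one $J_i$. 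If $e\notin C_5$ then $e\in J_i\iff e\notin C_i$; since $e$ then lies in exactly two of $C_1,\dots,C_4$, it lies in exactly two $J_i$. So every edge lies in one or two of the $J_i$, as required.

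For the converse, let $J_1,\dots,J_4$ be joins forming a $(1,2)$-cover, and let $A$ be the set of edges lying in exactly one of them (the remaining edges then lie in exactly two). First I would verify that $A$ is a cycle: the $\mathrm{GF}(2)$-sum $\chi(J_1)+\chi(J_2)+\chi(J_3)+\chi(J_4)$ equals $\chi(A)$, since an edge contributes $1$ precisely when it lies in an odd number — i.e. exactly one — of the $J_i$; writing $\chi(J_i)=\mathbf{1}+z_i$ with $z_i\in\mathcal{Z}$ gives $\sum_i\chi(J_i)=4\cdot\mathbf{1}+\sum_i z_i=\sum_i z_i\in\mathcal{Z}$, so $\chi(A)\in\mathcal{Z}$ and $A$ is a cycle. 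Now set $C_5:=A$ and $C_i:=(E(G)\setminus J_i)\triangle A$ for $i=1,2,3,4$; each is a symmetric difference of cycles (using Observation~\ref{cyclevsjoin} for $E(G)\setminus J_i$), hence a cycle. A count dual to the previous one finishes the job: $e\in C_i$ iff exactly one of ``$e\notin J_i$'' and ``$e\in A$'' holds, so if $e\in A$ then $e\in C_i\iff e\in J_i$, giving one index $i\le 4$ plus membership in $C_5=A$, a total of two; and if $e\notin A$ then $e\in C_i\iff e\notin J_i$, giving two indices $i\le 4$ and non-membership in $C_5$, again a total of two. Thus $\{C_1,\dots,C_5\}$ is a $5$-cycle double cover (with some $C_i$ possibly empty, which merely means fewer than five cycles are actually used).

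The two assignments $C_i\mapsto E(G)\setminus(C_i\triangle C_5)$ and $J_i\mapsto(E(G)\setminus J_i)\triangle A$ are mutually inverse, so the equivalence is really a correspondence and no further argument is needed. I do not expect a genuine obstacle here; the only point requiring an idea is to find the right transformation. The obvious attempt — complementing four of the cycles of a $5$-CDC — fails because it produces a cover in which some edges are used three times rather than two (a ``$(2,3)$-cover''); one must instead \emph{twist} each complement by the distinguished cycle $C_5$ (equivalently, by the ``error'' cycle $A$ in the reverse direction). Once that is in place, both implications reduce to bookkeeping with $\mathrm{GF}(2)$ characteristic vectors and a two-case count at a generic edge.
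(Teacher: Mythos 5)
Your argument is correct. Note first that the paper itself gives no proof of this statement: it is imported verbatim from Hou, Lai and Zhang \cite{HLZ} (their Theorem 3.3), so there is no in-paper argument to compare against. Your proof is a clean, self-contained derivation along the lines one would expect from \cite{HLZ}: the key points all check out. The complement of a cycle in a cubic graph is a join (Observation~\ref{cyclevsjoin}), the cycle space is closed under symmetric difference, and the joins of a cubic graph form the coset $\mathbf{1}+\mathcal{Z}$, which is exactly what makes $A=J_1\triangle J_2\triangle J_3\triangle J_4$ a cycle (since $4\cdot\mathbf{1}=\mathbf{0}$ over $\mathrm{GF}(2)$). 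The two-case edge counts in both directions are right, and the observation that the naive complementation must be twisted by the distinguished cycle $C_5$ (resp.\ by $A$) is precisely the idea needed; the maps are indeed mutually inverse. Two minor points you could make explicit but which do not affect correctness: the resulting $C_i$ may be empty or may coincide, so the $5$-CDC should be read as a multiset of at most five cycles (you already flag the empty case); and each $J_i$ produced in the forward direction is automatically a genuine join covering every vertex, since in a cubic graph the complement of a cycle has all degrees $1$ or $3$.
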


As a consequence of Theorem~\ref{4joins_5CDC}, every cubic graph with
excessive index $4$ admits a $5$-cycle double cover, as already
directly proved by Steffen in \cite{S14}. Hence, looking for a
possible counterexample for the $5$-Cycle Double Cover Conjecture, one
should look into the class of snarks with excessive index at least
$5$.

In what follows, we introduce a new general sufficient condition for a cubic graph to admit a $5$-cycle double cover (Theorem \ref{3colconjoin_5CDC}) and we show how it can be easily used to prove the existence of a $5$-cycle double cover for every treelike snark (Theorem \ref{treesnarks_5CDC}).

If $C$ is a circuit in a cubic graph $G$, we let $G_C$ denote the
multigraph obtained by successively contracting each edge of $C$ to a
vertex.

\begin{lemma}\label{ext_join}
Let $G$ be a cubic graph and let $C$ be a circuit of $G$. Then any
join $J'$ of $G_C$ can be extended to a join $J$ of $G$. Moreover, the
previous extension can be performed in two distinct ways.
\end{lemma}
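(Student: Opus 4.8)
The plan is to turn the extension problem into a linear system over $\mathbb{F}_2$ on the circuit $C$ and to prove that this system is consistent with a two-element solution set, the two solutions differing exactly by the set of all circuit edges.

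First I would record the structure of $G_C$. Contracting the edges of $C$ identifies all vertices of $C$ into a single vertex $w$; the edges of $G_C$ are precisely the edges of $G$ not lying on $C$ (a chord of $C$, if any, becoming a loop at $w$), and a count of the degrees of the vertices of $C$ gives $\deg_{G_C}(w)=|E(C)|$. Write $C=v_1v_2\cdots v_kv_1$ with edges $e_i=v_iv_{i+1}$ (indices mod $k$), and identify $E(G_C)$ with $E(G)\setminus E(C)$ in the natural way. Any join $J$ of $G$ with $J\cap(E(G)\setminus E(C))=J'$ has the form $J=J'\cup S$ for some $S\subseteq E(C)$. For a vertex of $G$ not on $C$, its degree in $J$ equals its degree in $J'$, which is odd because $J'$ is a join of $G_C$ and such vertices have degree $3$ there; hence the join condition for $J$ is automatic off $C$, and the whole problem is to choose $S\subseteq E(C)$ so that every $v_i$ has odd degree in $J$.

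Next I would set up the equations. Let $x_i\in\mathbb{F}_2$ be the indicator of $e_i\in S$, and let $d_i\in\mathbb{F}_2$ be the number modulo $2$ of edges of $J'$ incident with $v_i$, i.e.\ $d_i=1$ exactly when the unique non-circuit edge at $v_i$ lies in $J'$. Since $v_i$ is incident with the two circuit edges $e_{i-1},e_i$ besides that non-circuit edge, the requirement that $\deg_J(v_i)$ be odd becomes $x_{i-1}+x_i=1+d_i$ in $\mathbb{F}_2$, for $i=1,\dots,k$. The coefficient matrix of this cyclic system has all-ones as its unique nonzero left-kernel vector and all-ones as its unique nonzero right-kernel vector, so the system is solvable if and only if $\sum_i(1+d_i)=0$, and when solvable it has exactly two solutions, namely some $S$ and its symmetric difference with $E(C)$.

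It remains to check consistency, and this is the one place where the hypothesis that $J'$ is a join of $G_C$ is genuinely used. Each spoke of $C$ lying in $J'$ contributes $1$ to $\sum_i d_i$ and each chord of $C$ lying in $J'$ contributes $2$, so $\sum_i d_i\equiv\deg_{J'}(w)\pmod 2$; since $J'$ is a join, $\deg_{J'}(w)\equiv\deg_{G_C}(w)=k\pmod 2$, and therefore $\sum_i(1+d_i)\equiv k+k\equiv 0$. Hence the system has exactly two solutions $S$, each giving a join $J=J'\cup S$ of $G$ extending $J'$; the two choices of $S$ differ by $E(C)\neq\emptyset$ and are disjoint from $J'$, so the two joins $J$ are distinct. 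I expect the main obstacle to be precisely this bookkeeping step — correctly relating $\sum_i d_i$ to $\deg_{J'}(w)$ while accounting for chords of $C$ (loops of $G_C$) — rather than the linear algebra, which is routine.
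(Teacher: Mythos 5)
Your proof is correct, and its core is the same parity fact the paper uses: the number of circuit vertices whose spoke is absent from $J'$ is even precisely because $J'$ has the right degree parity at the contracted vertex $w$ (whose degree in $G_C$ equals $|E(C)|$). Where you differ is in the mechanism for producing $S\subseteq E(C)$: the paper constructs it explicitly, listing the indices $i$ with $vw_i\notin J'$ in cyclic order and taking the arcs of $C$ between consecutive pairs of them, whereas you encode the constraints as the cyclic system $x_{i-1}+x_i=1+d_i$ over $\mathbb{F}_2$ and invoke the rank/kernel structure of the circulant. Your route buys two small extras: it shows there are \emph{exactly} two extensions agreeing with $J'$ off $C$ (the paper only exhibits two, obtained from each other by symmetric difference with $C$ --- the same pair your kernel argument produces), and it explicitly accounts for chords of $C$, which become loops at $w$ and contribute $2$ to both $\sum_i d_i$ and $\deg_{J'}(w)$; the paper's phrasing tacitly treats each $w_i$ as a vertex off $C$. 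The paper's construction is more concrete and is reused later (the ``shift by $C$'' move in Lemma 4.4), but either argument suffices for the statement as given.
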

\begin{proof}
Let the vertices of $C$ be denoted by $v_0,\dots,v_t$ in order. For
$i\in\Setx{0,\dots,t}$, let $w_i$ be the unique neighbour of $v_i$
such that $v_iw_i$ is not an edge of $C$. Furthermore, let $v$ denote
the vertex of $G_C$ corresponding to the contracted circuit $C$.

Assume that $J'$ is a given join of $G_C$. Let $I$ be the set of all
$i$ such that $0\leq i\leq t$ and the edge $vw_i$ of $G_C$ does not
belong to $J'$. Since $J'$ is a join of $G'$, $|I|$ is even, say
$|I|=2r$. Let us write $I=\Setx{i_0,\dots,i_{2r-1}}$, where
$i_0<\ldots<i_{2r-1}$. We extend $J'$ to a join $J$ of $G$ as follows:
\begin{itemize}
\item an edge of $G$ not incident with $C$ belongs to $J$ if and only
  if the corresponding edge of $G_C$ belongs to $J'$,
\item an edge $[v_i,w_i]$ of $G$ belongs to $J$ if and only if the corresponding edge $[v,w_i]$ belongs to $J'$,
\item an edge $[v_i,v_{i+1}]$ (indices taken modulo $t$) of $C$
  belongs to $J$ if and only if the relation $i_{2\ell}\leq i < i_{2\ell+1}$
  holds for some non-negative integer $\ell$.
\end{itemize}

To prove the last assertion of the lemma, we can obtain a different
join from $J$ by taking the symmetric difference with $C$ (that is,
removing from $J$ all of its edges contained in $C$, and adding to $J$
all edges of $C$ not in $J$).
\end{proof}

Recall that an edge of a graph is \emph{pendant} if it is incident
with a vertex of degree $1$. Let $J$ be a join of a graph $G$ and
$c:E(J)\to\{1,2,3\}$ be a proper $3$-edge-colouring of $J$.  The
colouring $c$ is said to be {\em congruent} if $|S|\equiv|S \cap
c^{-1}(i)|\pmod 2$ for any set of pendant edges of $J$ forming an
edge-cut of $G$ and for $i=1,2,3$.

Roughly speaking, a $3$-edge colouring of a join is congruent if it
satisfies the condition of the Lemma~\ref{paritylemma} on every set of
pendant edges incident with a circuit in $E(G) \setminus J$.

\begin{lemma}\label{3colconjoin_012cover}
  A cubic graph $G$ admits a connected join with a congruent
  $3$-edge-colouring if and only if it admits a $2$-packing of $4$
  joins, at least one of which is connected.
\end{lemma}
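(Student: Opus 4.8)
I want to prove the equivalence in Lemma~\ref{3colconjoin_012cover} by moving between the two combinatorial structures via the contraction operation $G \mapsto G_C$ and the extension Lemma~\ref{ext_join}. The key dictionary is: a connected join $J$ of $G$ whose complementary cycle is a circuit $C$ corresponds, after contracting $C$, to a spanning subgraph of $G_C$; and the three colour classes of a proper $3$-edge-colouring of $J$ are themselves joins of $G$ (indeed, the union of any two of them is a join, being an even subgraph complemented — I will use Observation~\ref{cyclevsjoin} and the Parity Lemma here). Conversely, given a $2$-packing by $4$ joins with one of them connected, I want to produce a congruent $3$-edge-colouring of that connected join.

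\textbf{Forward direction.} Suppose $G$ has a connected join $J$ with a congruent $3$-edge-colouring $c : E(J) \to \{1,2,3\}$. Since $J$ is connected and spanning and $G$ is cubic, $E(G) \setminus E(J)$ is a cycle (by Observation~\ref{cyclevsjoin}), i.e. a disjoint union of circuits $C_1, \dots, C_k$ (plus isolated vertices, but in a cubic graph a join has every vertex of degree $1$ or $3$, so the complement has degree $0$ or $2$). For each circuit $C_j$, the edges of $J$ incident with $C_j$ are exactly the pendant edges of $J$ ``hanging off'' $C_j$, and they form an edge-cut of $G$; congruence of $c$ is precisely the statement that $c$ restricted to this cut satisfies the Parity Lemma condition. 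I will define the four joins as follows: $J_0 := J$ itself, and $J_i := c^{-1}(\{1,2,3\}\setminus\{i\}) \cup (\text{a suitable subset of } E(G)\setminus E(J))$ for $i = 1,2,3$ — that is, start from the union of two colour classes of $J$, which is a join of the graph $(V(G), E(J))$, and then use Lemma~\ref{ext_join} on each circuit $C_j$ to extend it to an honest join of $G$ by adding some edges of the $C_j$'s. The congruence hypothesis is exactly what guarantees that the ``$|I|$ is even'' condition in the proof of Lemma~\ref{ext_join} holds for each $C_j$, so the extension is possible. Finally I must check the packing condition: each edge of $G$ lies in at most two of $J_0, J_1, J_2, J_3$. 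An edge $e \in E(J)$ with colour $i$ lies in $J_0$ and in $J_j$ for $j \neq i$ — that is three of them — so this naive choice fails, and I need to be more careful: I should instead take $J_1, J_2, J_3$ to be the three ``two-colour-class'' joins and drop $J_0$, or alternatively arrange that $J_0 = J$ is used together with only two of the others. The cleanest fix: take the four joins to be $J$, $c^{-1}(2)\cup c^{-1}(3) \cup (\cdots)$, $c^{-1}(1)\cup c^{-1}(3)\cup(\cdots)$, $c^{-1}(1)\cup c^{-1}(2)\cup(\cdots)$ and observe an edge of colour $i$ in $J$ sits in $J$ and in exactly two of the others — still three. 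So actually I must use only the three two-class joins plus choose the fourth cleverly, or exploit the freedom in Lemma~\ref{ext_join} (two choices per circuit) to balance multiplicities on the cycle edges; an edge of $E(G)\setminus E(J)$ must end up in at most two of the four. Getting the multiplicities right on both $E(J)$ and its complement is the main obstacle, and I expect the resolution is that one of the four joins should be taken to be a single colour class together with the whole complementary cycle (or an appropriate extension), so that edges of $J$ are covered with multiplicity exactly $1+1 = 2$ and complement-edges with multiplicity at most $2$ as well.

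\textbf{Reverse direction.} Suppose $G$ has a $2$-packing $\{J_0, J_1, J_2, J_3\}$ of four joins with $J_0$ connected. Count: $\sum_i |E(J_i)| \le 2|E(G)| = 3|V(G)|$, and each vertex of $G$ has degree $1$ or $3$ in each join, so $\sum_i \deg_{J_i}(v) \ge 4$ with equality forced when things are tight; a parity/counting argument (each $\deg_{J_i}(v) \in \{1,3\}$, sum over $i$ is even and at most $\sum_i$ of the bound) shows that at each vertex exactly one $J_i$ has degree $3$ and the rest degree $1$, and moreover each edge is in exactly two of the four joins — i.e. the $2$-packing is actually a ``tight'' one, a $(2,2)$-cover. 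Then for an edge $e$, let $\sigma(e) \subseteq \{0,1,2,3\}$ be the pair of indices of joins containing $e$; since $J_0$ is one of four and each edge is in a pair, I can colour each edge $e \in E(J_0)$ by the unique element of $\{1,2,3\}$ that is the ``partner'' index, i.e. $c(e) = j$ where $\sigma(e) = \{0, j\}$. I claim this $c$ is a proper $3$-edge-colouring of $J_0$: at a vertex $v$, if $v$ has degree $3$ in $J_0$ then by the tightness $v$ has degree $1$ in each of $J_1, J_2, J_3$, so the three edges of $J_0$ at $v$ carry the three distinct partner-labels $1, 2, 3$; if $v$ has degree $1$ in $J_0$ there is nothing to check. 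Congruence then follows from the Parity Lemma applied inside each $J_i$-versus-$J_0$ incidence structure, or more directly: for any circuit $C$ in $E(G) \setminus E(J_0)$, the pendant edges of $J_0$ on $C$ form a cut $T$, and $|T \cap c^{-1}(j)| = |T \cap E(J_j)|$ by the definition of $c$, while $T \subseteq E(J_j) \triangle (\text{even part})$ gives the required congruence $|T \cap E(J_j)| \equiv |T| \pmod 2$ because $J_j$ is a join and $T$ is a cut — this is again just the Parity Lemma statement for joins, which I will state and use. The main work here is verifying the ``tightness'' claim (that the packing is forced to be a $(2,2)$-cover with exactly-one-degree-3 pattern at each vertex) and then checking the congruence condition carefully on each relevant cut.

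I expect the genuine obstacle to be the bookkeeping of edge-multiplicities in the forward direction — ensuring the four constructed joins genuinely form a $2$-packing — which will require using the two-fold freedom in Lemma~\ref{ext_join} to distribute the cycle edges correctly, rather than any deep new idea.
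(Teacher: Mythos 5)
There are genuine gaps in both directions, and the forward direction is not actually a proof. Your candidate joins for the forward direction --- unions of two colour classes of $J$ --- are not joins at all: at a vertex of degree $3$ in $J$ the union of two colour classes has degree $2$, which is forbidden for a join of a cubic graph, so Observation~\ref{cyclevsjoin} does not rescue this. You then correctly notice that the multiplicity count fails as well (an edge of $J$ would be covered three times), but you never land on a working construction; you explicitly leave ``getting the multiplicities right'' as an unresolved obstacle. The construction that works, and the one the paper uses, is to take each $J_i$ ($i=1,2,3$) to meet $J$ in the \emph{single} colour class $c^{-1}(i)$. The point you are missing is that congruence of $c$ is exactly the condition guaranteeing that each single colour class is a join of the contracted graph $G_C$ (at a contracted circuit the class has the right parity of pendant edges; at a degree-$3$ vertex of $J$ it has degree $1$), so Lemma~\ref{ext_join} applies and extends it to a join of $G$. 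With this choice every edge of $J$ is automatically covered exactly twice (by $J$ and by $J_{c(e)}$), and the only remaining issue is the complement circuits, where the two-fold freedom of Lemma~\ref{ext_join} (replacing $J_1\cap C$ by its complement in $C$) flips the coverage parity on a circuit $C$ from odd to even, forcing coverage $0$ or $2$ there. You gesture at this last step but without the correct base construction it cannot be carried out.

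In the reverse direction your global ``tightness'' claim is false: a $2$-packing of four joins need not be a $(2,2)$-cover, and it is not true that at every vertex exactly one join has degree $3$ (at a vertex where $J_0$ has degree $1$, it may happen that no join has degree $3$ and some edge is uncovered). Your counting argument only gives $4|V|\le\sum_{i,v}\deg_{J_i}(v)\le 6|V|$, which forces nothing. What is true, and is all that is needed, is the \emph{local} statement at a vertex $v$ of degree $3$ in the connected join $J_0$: all three edges at $v$ lie in $J_0$, each can lie in at most one further join, and each of $J_1,J_2,J_3$ has degree at least $1$ at $v$, so each contains exactly one of the three edges and these are pairwise distinct. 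Connectivity of $J_0$ is then essential (and you do not invoke it for this purpose): it guarantees every edge of $J_0$ has an endpoint of degree $3$ in $J_0$, which is what makes your colouring $c$ well defined and proper; an edge of $J_0$ both of whose ends had degree $1$ could fail to have a unique ``partner'' join. Your verification of congruence via $|T\cap c^{-1}(j)|=|T\cap E(J_j)|$ and the parity of a join across a cut is fine once these repairs are made.
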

\begin{proof}
  Assume that a join $J_4$ is connected and admits a congruent
  $3$-edge-colouring $c$. In order to prove our assertion, we need to
  construct further joins $J_1,J_2,J_3$ of $G$ such that each edge is
  contained in at most two of $J_1,\dots,J_4$. We choose $J_1,J_2$ and
  $J_3$ using Lemma \ref{ext_join} in such a way that $J_i \cap J_4 =
  \{e \in J_4:\,c(e)=i\}$ for each $i\in\Setx{1,2,3}$. We need to
  modify $\Setx{J_1,\dots,J_4}$ in order to obtain a $2$-packing.
  Clearly, every edge of $J_4$ is covered exactly twice by the four
  joins. The complement of $J_4$ is a family of disjoint circuits. Let
  $C$ be a circuit in $E(G) \setminus J_4$ and note that either all
  edges of $C$ are covered an even number of times or (all of them) an
  odd number of times by $\Setx{J_1,J_2,J_3}$. In the latter case, if
  there is an edge of $C$ covered three times, then we shift the
  selection of one join, say $J_1$, on $C$ according to the second
  part of Lemma \ref{ext_join}. In this way, every edge of $C$ is
  covered $0$ or $2$ times as desired. By repeating the same process
  on each circuit of $E(G) \setminus J_4$, we obtain a join $J'_1$
  such that $\Setx{J'_1,J_2,J_3,J_4}$ is a $2$-packing and $J_4$
  connected.

  Conversely, suppose $\{J_1,J_2,J_3,J_4\}$ a $2$-packing of joins
  with $J_4$ connected. We have to prove that $J_4$ has a congruent
  $3$-edge-colouring. If an edge is incident to a vertex of degree 3 of
  $J_4$, then it is covered exactly twice: it belongs to $J_4$ and to
  exactly one of the joins $J_1$,$J_2$ and $J_3$. But, since $J_4$ is
  connected, every edge of $J_4$ has at least one end-vertex which has
  degree $3$ in $J_4$. Hence, $J_1 \cap J_4$, $J_2 \cap J_4$ and $J_3
  \cap J_4$ induce a $3$-edge-colouring of $J_4$. Moreover, the
  $3$-edge-colouring is congruent since Lemma \ref{paritylemma} holds
  for the intersection of any join with an edge-cut of $G$.
\end{proof}

\begin{theorem}\label{3colconjoin_5CDC}
Let $G$ be a cubic graph. If $G$ admits a connected join with a congruent $3$-edge-colouring, then it has a $5$-cycle double cover.
\end{theorem}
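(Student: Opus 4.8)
My plan is to reduce to the Hou--Lai--Zhang criterion (Theorem~\ref{4joins_5CDC}): it suffices to exhibit a $(1,2)$-cover of $G$ by four joins. By Lemma~\ref{3colconjoin_012cover}, the hypothesis already provides a $2$-packing $\{J_1,J_2,J_3,J_4\}$ of four joins with $J_4$ connected. The only defect of this packing, as compared to a $(1,2)$-cover, is that some edges may be covered zero times; the heart of the proof is to repair this by replacing $J_4$ with a cleverly chosen join $J_4'$.

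First I would establish two structural facts about $\{J_1,\dots,J_4\}$. (i) \emph{Every edge of $J_4$ is covered exactly twice.} Since $J_4$ is connected and has no vertex of degree $2$, every edge of $J_4$ has an endpoint $x$ of degree $3$ in $J_4$; the three edges at $x$ all lie in $J_4$, so each is covered at most once by $J_1,J_2,J_3$, while each $J_i$ has degree at least $1$ at $x$; counting incidences forces each of these three edges to lie in exactly one of $J_1,J_2,J_3$, hence to be covered exactly twice. (ii) \emph{The set $Z$ of edges covered by no $J_i$ is a matching, and every endpoint of a $Z$-edge is a leaf of $J_4$.} If two edges of $Z$ met at a vertex $v$, then the third edge at $v$ would lie in $J_4$ (as $J_4$ is a join) and, by (i), in exactly one $J_i$; then some other $J_j$ would have degree $0$ at $v$, a contradiction. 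The second assertion is immediate from the join property of $J_4$ at a $Z$-endpoint (its other two incident edges cannot both lie in $J_4$, and exactly one must).

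Next comes the repair. Let $T$ be the set of endpoints of the edges of $Z$; by (ii), $|T|=2|Z|$ is even and $T$ consists of leaves of $J_4$. Since $J_4$ is connected, it contains a $T$-join $W\subseteq E(J_4)$, i.e.\ an edge set with $\deg_W$ odd exactly on $T$; at a leaf this degree equals $1$. Therefore $D:=W\cup Z$ (a disjoint union, since $W\subseteq J_4$ and $Z\cap J_4=\emptyset$) is a cycle of $G$: at a vertex of $T$ it has degree $1+1=2$, and elsewhere its degree equals $\deg_W$, which is even. Set $J_4':=J_4\triangle D$; being the symmetric difference of a join with a cycle, $J_4'$ is again a join of $G$. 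Finally I would verify, edge by edge, that $\{J_1,J_2,J_3,J_4'\}$ is a $(1,2)$-cover: each edge of $Z$ now lies in $J_4'$ and is covered once; each edge of $W$ is removed from $J_4$ and drops from coverage $2$ (by (i)) to coverage $1$; each edge of $J_4\setminus W$ still lies in $J_4'$ and keeps coverage $2$; and every remaining edge lies outside $J_4\cup D$, so its coverage is unaffected and already lay in $\{1,2\}$ because it was not in $Z$ and the original family was a $2$-packing. Thus every edge is covered once or twice, and Theorem~\ref{4joins_5CDC} yields a $5$-cycle double cover.

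The step I expect to be the real obstacle is exactly this passage from the $2$-packing to a $(1,2)$-cover; one cannot in general fix the uncovered edges by locally rerouting one of $J_1,J_2,J_3$ along the circuits of $E(G)\setminus J_4$ (there are small cubic graphs for which every such local choice over- or under-covers some edge). The resource that makes the global fix possible is the connectedness of $J_4$, which is precisely what is needed to produce the $T$-join $W$ and thereby close the matching $Z$ up into a single cycle.
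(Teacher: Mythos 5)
Your proof is correct and follows essentially the same route as the paper: reduce via Lemma~\ref{3colconjoin_012cover} to a $2$-packing with $J_4$ connected, flip $J_4$ along a cycle that contains all uncovered edges and otherwise lies inside $J_4$, and invoke Theorem~\ref{4joins_5CDC}. The only (cosmetic) difference is that you build this cycle as $Z\cup W$ with $W$ a $T$-join of $J_4$, whereas the paper takes the symmetric difference of $J_4$ with one circuit $C_e$ per uncovered edge $e$; your version also spells out the coverage facts (every $J_4$-edge covered exactly twice, $Z$ a matching of $J_4$-leaves) that the paper leaves implicit.
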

\begin{proof}
  By Lemma \ref{3colconjoin_012cover}, we have that $G$ admits a
  $2$-packing $\cal J$ of $4$ joins with at least one of them
  connected, say $J_4$.  Since $J_4$ is connected, for every edge $e$
  uncovered in $\cal J$, there exist a circuit $C_e$ such that $e \in
  C_e$ and all other edges of $C_e$ belong to $J_4$. Construct $J'_4$
  as the symmetric difference of $J_4$ and all cycles $C_e$. Every
  edge of $J_4$ is still covered in $J'_4$ at least once, since it
  belongs to one of the joins $J_1$,$J_2$ and $J_3$, while every edge
  not covered in $\cal J$ belongs to $J'_4$, hence the set
  $\{J_1,J_2,J_3,J'_4\}$ is a $(1,2)$-covering by $4$ joins and by
  Theorem \ref{4joins_5CDC}, $G$ has a $5$-cycle double cover.
\end{proof}

\begin{corollary}\label{connected_complement}
Let $G$ be a cubic graph. If $G$ admits a $3$-edge-colourable connected join $J$ with a connected complement in $G$, then $G$ has a $5$-cycle double cover.
\end{corollary}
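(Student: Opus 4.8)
The plan is to deduce the corollary directly from Theorem~\ref{3colconjoin_5CDC}: that theorem promotes a connected join carrying a \emph{congruent} $3$-edge-colouring to a $5$-cycle double cover, so it suffices to show that under the present hypotheses \emph{every} proper $3$-edge-colouring of the given join $J$ is automatically congruent. Fix such a colouring $c:E(J)\to\{1,2,3\}$.

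First I would identify the complement. Since $J$ is a join of the cubic graph $G$, every vertex has degree $0$ or $2$ in $(V(G),E(G)\setminus E(J))$, so by Observation~\ref{cyclevsjoin} this complement is a cycle; being connected, it reduces to a single circuit $C$ (if it has no edge then $J=E(G)$, $J$ has no pendant edges, and the conclusion is immediate from Theorem~\ref{3colconjoin_5CDC}; so assume $C\neq\emptyset$). Note that $G$ is connected, being spanned by the connected subgraph $J$. The pendant edges of $J$ are then exactly the ``spokes'' of $C$, i.e.\ the edges of $J$ incident with $V(C)$; each vertex of $C$ carries exactly one spoke, and its other endpoint has degree $3$ in $J$.

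The heart of the proof is a short parity count. Let $S$ be any set of pendant edges of $J$ forming an edge-cut of $G$, with sides $A$ and $B$. Since no edge of $C$ lies in $J$ and $S\subseteq E(J)$, the circuit $C$ contains no edge of the cut $S$; being connected, $C$ therefore lies wholly on one side, say $V(C)\subseteq A$. Consequently every vertex of $B$ has degree $3$ in $J$, and each edge of $S$ joins a degree-$1$ vertex of $J$ in $V(C)\cap A$ to a degree-$3$ vertex of $J$ in $B$. Consider the subgraph $J_B$ of $J$ formed by all edges of $J$ meeting $B$: its vertices are the vertices of $B$ (each of degree $3$) together with the $|S|$ distinct $A$-endpoints of the edges of $S$ (each of degree $1$), so summing degrees shows $3|B|+|S|$ is even, whence $|S|\equiv|B|\pmod 2$. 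For each colour $i$ the set $c^{-1}(i)\cap E(J_B)$ is a matching covering every vertex of $B$ (all three colours occur at a degree-$3$ vertex) together with exactly $|S\cap c^{-1}(i)|$ vertices of $A$; as a matching covers an even number of vertices, $|S\cap c^{-1}(i)|\equiv|B|\equiv|S|\pmod 2$. Thus $c$ is congruent, and Theorem~\ref{3colconjoin_5CDC} yields the $5$-cycle double cover.

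The one step needing care — and the only place the hypothesis on the complement is really used — is the claim that in every edge-cut made of pendant edges the circuit $C$ lies entirely on one side; this is what forces every vertex of $B$ to have degree $3$ in $J$ and thereby drives the parity count, and it can fail for a disconnected complement. The remaining points (the degrees of the two endpoints of a spoke, and the vertex/degree count of $J_B$) are routine bookkeeping, after which the corollary follows from the already established Theorem~\ref{3colconjoin_5CDC}.
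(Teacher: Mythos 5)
Your proof is correct and follows essentially the same route as the paper: identify the complement as a single circuit, show that every proper $3$-edge-colouring of $J$ is automatically congruent, and invoke Theorem~\ref{3colconjoin_5CDC}. The only difference is one of detail --- the paper observes that the sole pendant-edge cut is the full set of spokes and appeals to the Parity Lemma, whereas you verify congruence for an arbitrary pendant-edge cut by a direct degree/matching count; both amount to the same parity argument.
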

\begin{proof}
The complement of $J$ is connected if and only if it is a unique circuit of $G$. The unique edge-cut of $G$ with pendant edges of $J$ is the entire set of all pendant edges, hence Lemma \ref{paritylemma} holds. It follows that any $3$-edge-colouring of $J$ is congruent and the assertion follows by previous theorem.
\end{proof}

Now we prove that every treelike snark has a $5$-cycle double cover.

Let $H$ be the $5$-sunlet, that is the graph on $10$ vertices obtained
by attaching $5$ pendant edges to a $5$-circuit (see Figure
\ref{3colouring}).  In order to prove Theorem \ref{treesnarks_5CDC}, we
will make use of the following lemma.
\begin{lemma}\label{5sunset}
  Let $H$ be the $5$-sunlet. If we prescribe the colours of any two
  non-consecutive pendant edges of $H$, we can complete this
  prescription to a proper $3$-edge-colouring of $H$.
\end{lemma}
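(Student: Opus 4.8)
The plan is to reduce the claim to a short explicit calculation about proper $3$-edge-colourings of the $5$-circuit inside $H$. Write the circuit of $H$ as $v_0v_1v_2v_3v_4$ with indices modulo $5$, let $e_i=v_iv_{i+1}$ be its edges, and let $f_i$ be the pendant edge at $v_i$. The first step is the observation that proper $3$-edge-colourings of $H$ correspond bijectively to proper $3$-edge-colourings of its $5$-circuit: since $C_5$ is an odd circuit, every proper colouring of $e_0,\dots,e_4$ by colours in $\{1,2,3\}$ uses all three colours, and then each $f_i$ is forced to receive the unique colour absent from $\{c(e_{i-1}),c(e_i)\}$, which makes the colouring of $H$ proper. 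In particular the colour induced on $f_i$ is $\{1,2,3\}\setminus\{c(e_{i-1}),c(e_i)\}$, so prescribing $c(f_i)=x$ amounts exactly to requiring $c(e_{i-1})\neq x$ and $c(e_i)\neq x$; the problem thus becomes one of colouring the circuit only.

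Next I would invoke the rotational symmetry of $H$. Since the two prescribed pendant edges are non-consecutive, after a rotation I may assume they are $f_0$ and $f_2$, with prescribed colours $a=c(f_0)$ and $b=c(f_2)$. By the previous step it then suffices to produce a proper edge-colouring of the circuit satisfying $c(e_0),c(e_4)\neq a$ and $c(e_1),c(e_2)\neq b$. I would exhibit one explicitly, distinguishing two cases. If $a\neq b$, let $d$ be the third colour and set $c(e_0)=b$, $c(e_1)=d$, $c(e_2)=a$, $c(e_3)=b$, $c(e_4)=d$. If $a=b$, write $\{x,y\}=\{1,2,3\}\setminus\{a\}$ and set $c(e_0)=c(e_2)=x$, $c(e_1)=c(e_4)=y$, $c(e_3)=a$. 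In each case one checks directly that consecutive edges of the circuit receive distinct colours and that the four side conditions hold, and then the forced colours on $f_1,f_3,f_4$ complete a proper $3$-edge-colouring of $H$ extending the prescription.

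There is no genuinely hard step; the only point that needs a moment's thought is the case $a=b$, in which both prescriptions demand that the two circuit edges at $v_0$ and the two at $v_2$ avoid the \emph{same} colour $a$. This stays feasible because the one circuit edge incident with neither $v_0$ nor $v_2$, namely $e_3$, can carry the colour $a$, after which $e_4,e_0,e_1,e_2$ form a path that is properly $2$-coloured by the alternation $y,x,y,x$, compatibly with $c(e_3)=a$. Verifying properness around the $5$-circuit together with the inequalities $c(e_0),c(e_4)\neq a$ and $c(e_1),c(e_2)\neq b$ in both cases is then routine.
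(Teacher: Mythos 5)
Your proof is correct and takes essentially the same approach as the paper, which simply exhibits one explicit $3$-edge-colouring of the $5$-sunlet and observes that a suitable permutation (and rotation) of it realises any prescription on two non-consecutive pendant edges. Your reduction to proper colourings of the inner $5$-circuit and the explicit case split on $a=b$ versus $a\neq b$ just spell out the same verification in a self-contained way; indeed both of your colourings are rotations and colour-permutations of the single colouring the paper displays in its figure.
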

\begin{proof}
The desired colouring can be easily obtained with a suitable permutation of the colouring $c'$ in Figure \ref{3colouring}.
\end{proof}

\begin{figure}[h]
	\centering
		\includegraphics[width=5cm]{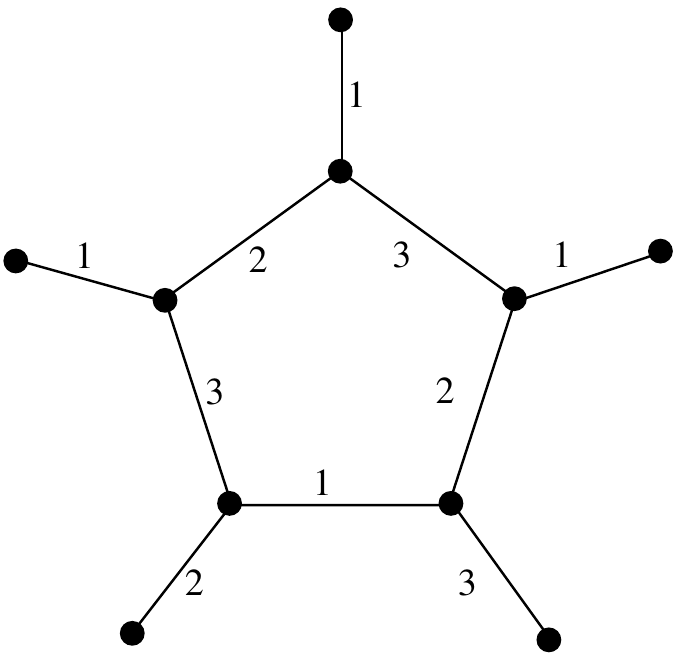}
		\caption{A proper $3$-edge-colouring $c'$ of the $5$-sunlet.}\label{3colouring}		
\end{figure}

\begin{theorem}\label{treesnarks_5CDC}
Treelike snarks admit a 5-cycle double cover.
\end{theorem}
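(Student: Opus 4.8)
The plan is to invoke Corollary~\ref{connected_complement}: it suffices to exhibit, in an arbitrary treelike snark $G=G(T_0,C_0)$, a connected $3$-edge-colourable join $J$ whose complement is a single circuit. We build $J$ as the complement of a circuit $\mathcal C$ assembled inside the Petersen fragments.

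Fix in the Petersen fragment $F_0$ a path $P$ of length three from the endvertex of $a_1$ to the endvertex of $a_5$ avoiding the endvertices of $a_2,a_3,a_4$. A direct inspection of $F_0$ shows that $F_0$, with the edges of $P$ and the two half-edges $a_1,a_5$ deleted, is connected and equals the union of a copy $H$ of the $5$-sunlet and the special path of $F_0$, with the special path glued to $H$ at the far endvertex of one pendant edge $f$ of $H$ (and carrying $a_3$ at its middle vertex and $a_1$ at its far end); moreover $a_4$ is itself a pendant edge of $H$, and $f$ and $a_4$ meet the circuit of $H$ at non-consecutive vertices. For each leaf $\ell$ of $T_0$, with successor $\ell'$ along $C_0$, let $P_\ell$ (resp.\ $H_\ell$) be the corresponding path (resp.\ sunlet) in $F^\ell_0$, and set
\[
  \mathcal C \;=\; \bigcup_{\ell}\Big(E(P_\ell)\ \cup\ \{\,a^\ell_5a^{\ell'}_1\,\}\Big).
\]
Since $P_\ell$ runs between the endvertices of $a^\ell_1$ and $a^\ell_5$, and $a^\ell_5a^{\ell'}_1$ joins the latter to the first endvertex of $P_{\ell'}$, the set $\mathcal C$ is a single circuit; hence $J:=E(G)\setminus\mathcal C$ is a join with connected complement. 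Moreover inside each fragment $J\cap F^\ell_0$ is precisely $H_\ell$ together with the special path (the half-edges $a^\ell_1,a^\ell_5$ being consumed by the circuit edges of $\mathcal C$ meeting $F^\ell_0$), which is connected and is joined to $T_0$ through $a^\ell_3$; since $J$ also contains all edges of $T_0$ and all edges $a^\ell_4a^{\ell'}_2$, the join $J$ is connected.

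It remains to $3$-edge-colour $J$. Colour the tree $T_0$ together with the central spokes first: this is a tree of maximum degree three, so three colours suffice, and every $a^\ell_3$ now has a colour. List the fragments in the cyclic order $F^{\ell_1}_0,\dots,F^{\ell_m}_0$ along $C_0$, fix once and for all a colour $\gamma\neq c(a^{\ell_1}_3)$ to be used on the edge $a^{\ell_1}_4$, and colour the fragments in the order $\ell_2,\dots,\ell_m,\ell_1$. When $F^{\ell_i}_0$ is reached, the colours of $a^{\ell_i}_3$ and of $a^{\ell_i}_2$ are known ($a^{\ell_i}_2=a^{\ell_{i-1}}_4$, coloured in the preceding step, or $=\gamma$ when $i=2$); propagating along the special path, these two colours determine — up to a harmless binary choice when $c(a^{\ell_i}_2)=c(a^{\ell_i}_3)$ — the colour of the pendant edge $f$ of $H_{\ell_i}$, whereupon the remaining two edges of the special path are forced. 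If $i\neq1$ we apply Lemma~\ref{5sunset} to $H_{\ell_i}$ prescribing the colour of $f$ (and of a pendant edge non-consecutive with it), obtaining a proper $3$-edge-colouring of $H_{\ell_i}$, hence of all of $J\cap F^{\ell_i}_0$, and in particular a colour for $a^{\ell_i}_4=a^{\ell_{i+1}}_2$. If $i=1$ we apply Lemma~\ref{5sunset} to $H_{\ell_1}$ prescribing the colours of \emph{both} $f$ and $a^{\ell_1}_4$, the latter equal to $\gamma$; this is admissible since $f$ and $a^{\ell_1}_4$ are non-consecutive, and it completes the $3$-edge-colouring of $J$. By Corollary~\ref{connected_complement}, $G$ has a $5$-cycle double cover.

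The substantive point is the purely local claim about a single Petersen fragment: that excising the length-three path $P$ exposes, in the position described, a $5$-sunlet whose two pendant edges that matter for the inductive sweep around $C_0$ — the edge $f$ reached through the special path (which receives the incoming colours of $a_2$ and $a_3$) and the outgoing half-edge $a_4$ — are non-consecutive, and that the short special path always funnels the incoming colour constraint onto the single edge $f$. Granting this, Lemma~\ref{5sunset} supplies exactly the freedom needed at each fragment; the only global subtlety, closing the cycle at $F^{\ell_1}_0$, is dispatched by committing the colour $\gamma$ of $a^{\ell_1}_4$ in advance and choosing it distinct from $c(a^{\ell_1}_3)$.
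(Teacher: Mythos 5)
Your proposal is correct and follows essentially the same route as the paper: the same circuit (the union of the length-three paths $P^{\ell}$ with the edges formed from $a^{\ell}_5$ and $a^{\ell'}_1$), the same complementary join, the reduction via Corollary~\ref{connected_complement}, and the same use of Lemma~\ref{5sunset} to colour the $5$-sunlet exposed in each fragment. The only (cosmetic) difference is organizational: you colour the fragments sequentially around $C_0$, propagating the colour of $a^{\ell_i}_4=a^{\ell_{i+1}}_2$ and pre-committing a colour $\gamma$ to close the cycle, whereas the paper notes that the join minus the sunlet interiors is already a tree of maximum degree three (each edge $a^{\ell}_4a^{\ell'}_2$ being a pendant edge of that tree, since the $a_4$-endvertex has its other two edges inside the sunlet), colours that tree in one step and then fills in each sunlet independently --- which renders your wrap-around fix, and the condition $\gamma\neq c(a^{\ell_1}_3)$ (unnecessary in any case, since $a_4$ and $a_3$ are not adjacent and Lemma~\ref{5sunset} allows equal colours on non-consecutive pendant edges), superfluous.
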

\begin{proof}
  By Corollary \ref{connected_complement} and Theorem
  \ref{3colconjoin_5CDC}, it is sufficient to prove that every
  treelike snark admits a connected $3$-edge-colourable join which is
  the complement of a circuit. Let $G=G(T_0,C_0)$ be a treelike
  snark. Recall that for each leaf $\ell$ of $T_0$, there is a
  Petersen fragment $F^\ell_0$; suppose that its loose half-edges are
  denoted by $(a^\ell_1,\dots,a^\ell_5)$. In addition, let $P^\ell$
  denote the unique path of length $3$ connecting the endvertices of
  $a^\ell_1$ and $a^\ell_5$.

  Let $C$ be the union of the paths $P^\ell$ (with $\ell$ ranging over
  the leaves of $T_0$), together with the edges connecting their
  endvertices (i.e., the edges containing a loose half-edge $a^\ell_1$
  or $a^\ell_5$ of some Petersen fragment $F^\ell_0$). By the
  construction of $G$, $D$ is a circuit. (See Figure
  \ref{fragment_decomp} where the edges of $C$ are shown dashed.)

  Let $J$ be the join obtained as complement of $C$ in $G$. The join
  $J$ is connected because there is a path from any vertex of $J$ to a
  vertex of the tree $T_0$. We now prove that $J$ is
  $3$-edge-colourable. Let $X$ be the set of all edges in each Petersen
  fragment $F^\ell_0$ that are not contained in $P^\ell$, do not
  contain any loose half-edge of $F^\ell_0$, and are not incident with
  $a^\ell_1$ nor $a^\ell_2$. The edges in $X$ are shown bold in Figure
  \ref{fragment_decomp}. By inspecting the figure, we find that $G-X$
  is a tree of maximum degree $3$; let us denote it by $T$, Clearly,
  $T$ admits a proper $3$-edge-colouring $c$. By Lemma \ref{5sunset},
  whatever are the colours of the two edges of $T$ incident with the
  unique $5$-circuit of $X$ inside any fragment $F^\ell_0$, we can
  extend $c$ to a proper $3$-edge-colouring on $F^\ell_0$. Eventually,
  we obtain a proper $3$-edge-colouring of the join $J$.
\end{proof}

\begin{figure}[h]
	\centering
		\includegraphics[width=7cm]{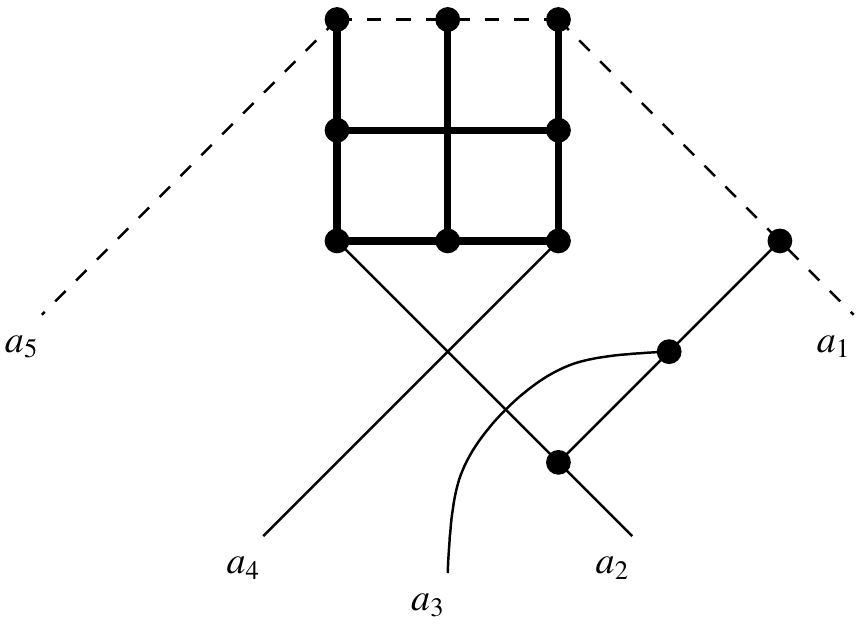}
		\caption{An illustration to the proof of
                  Theorem~\ref{treesnarks_5CDC}. Edges of the circuit
                  are shown dashed, edges of $X$ are shown
                  bold.}\label{fragment_decomp}
\end{figure}

We conclude this section by pointing out a reformulation of
Theorem~\ref{3colconjoin_5CDC} in terms of nowhere-zero flows:
\begin{corollary}
  Let $G$ be a cubic graph admitting a connected join $J$. Let $G'$ be
  the graph obtained from $G$ by contracting each circuit of the
  complement of $J$ to a vertex. If $G'$ admits a nowhere-zero
  $4$-flow, then $G$ admits a $5$-cycle double cover. 
\end{corollary}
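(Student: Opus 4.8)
The plan is to deduce the corollary from Theorem~\ref{3colconjoin_5CDC}, so the task reduces to extracting, from a nowhere-zero $4$-flow on $G'$, a congruent $3$-edge-colouring of the connected join $J$. I would begin with two structural remarks: the edges contracted in passing from $G$ to $G'$ are exactly the edges of the complement of $J$, so $E(G')=E(J)$ as a multigraph; and $G'$ is connected because $J$ is. A vertex of $J$ of degree $3$ keeps degree $3$ in $G'$, whereas a circuit $C$ of the complement becomes a vertex of degree $|C|$ whose incident edges are precisely the pendant edges of $J$ meeting $C$.

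Next I would replace the nowhere-zero $4$-flow by a nowhere-zero $\mathbb{Z}_2\times\mathbb{Z}_2$-flow $\phi:E(G')\to(\mathbb{Z}_2\times\mathbb{Z}_2)\setminus\{0\}$, which exists by the classical equivalence of nowhere-zero flows over abelian groups of the same order; since the group has exponent $2$ no orientation is needed and $\sum_{e\ni v}\phi(e)=0$ at every vertex $v$. Reading $\phi$ as a colouring $c$ of $E(J)$ by the three non-zero elements $x_1,x_2,x_3$, which satisfy $x_1+x_2+x_3=0$ while any single $x_i$ is non-zero and any two of them are independent: at a vertex of degree $3$ in $J$ the three incident values are non-zero and sum to zero, hence are pairwise distinct, so $c$ is a proper $3$-edge-colouring of $J$.

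For congruence, let $S$ be any set of pendant edges of $J$ that is an edge-cut of $G$. Each pendant edge of $J$ meets a circuit of the complement, so $S$ avoids all contracted edges; as each such circuit is connected it lies on one side of any disconnection of $G-S$, and one checks that $S$ is therefore an edge-cut of $G'$ as well. Hence $\sum_{e\in S}\phi(e)=0$, since a group-valued flow vanishes on every edge-cut. Writing $n_i=|S\cap c^{-1}(i)|$, this reads $(n_1\bmod 2)x_1+(n_2\bmod 2)x_2+(n_3\bmod 2)x_3=0$; the only $\{0,1\}$-coefficient vectors killing this combination are $(0,0,0)$ and $(1,1,1)$, so $n_1\equiv n_2\equiv n_3\pmod 2$, which is exactly $|S|\equiv n_i\pmod 2$ for each $i$ --- the congruence condition. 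Thus $J$ is a connected join with a congruent $3$-edge-colouring and Theorem~\ref{3colconjoin_5CDC} gives the $5$-cycle double cover. The one delicate point is that congruence a priori constrains \emph{all} pendant-edge cuts of $G$, not just the stars of the contracted vertices; the argument handles them uniformly by observing that these are precisely the edge-cuts of $G'$, so that the single hypothesis that $G'$ carries a nowhere-zero $4$-flow controls every one of them at once.
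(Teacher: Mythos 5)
Your proof is correct and follows essentially the same route as the paper: convert the nowhere-zero $4$-flow on $G'$ into a $\mathbb{Z}_2\times\mathbb{Z}_2$-flow (equivalently, the paper's decomposition of $E(G')$ into three joins), read it as a $3$-edge-colouring of $J$, check properness at degree-$3$ vertices and congruence on pendant-edge cuts, and invoke Theorem~\ref{3colconjoin_5CDC}. The only difference is one of detail: the paper dispatches the congruence in one sentence by citing the well-known join decomposition, whereas you explicitly verify that every pendant-edge cut of $G$ survives as a cut of $G'$ on which the flow sums to zero --- a point worth making, and made correctly.
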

\begin{proof}
  It is well known that if $G'$ admits a nowhere-zero $4$-flow, then
  its edges can be coloured with three colours such that the edges of
  each colour constitute a join. The corresponding colouring of the
  edges of $J$ is a congruent 3-edge-colouring, so the hypothesis of
  Theorem~\ref{3colconjoin_5CDC} is satisfied and the claim follows.
\end{proof}

\section{Pattern sets}\label{appendix}

The notions of a pattern and the pattern set of a fragment were
introduced in Section~\ref{sec:patterns}. The pattern sets in this
section were determined by computer enumeration. We do not list
patterns obtained from the listed ones by one or more of the following
operations (cf. the discussion above Proposition~\ref{tls-snark}):
\begin{itemize}
\item interchanging the first two subsets of $\alphabet$ in the pattern,
\item interchanging the last two subsets of $\alphabet$ in the pattern,
\item permuting the elements of $\alphabet$.
\end{itemize}

\subsection{The pattern set of the Petersen fragment}\label{app-f0}
\begin{minipage}[t]{14cm}

\noindent The pattern set of $F_0$ (42 patterns):

\begin{minipage}[t]{3cm}
\small
A A AB AC AD  \\
A A AB C D  \\
A AB A AC AD \\
A AB A BC BD \\
A AB AC A AD  \\
A AB AC B BD  \\
A AB AC C CD  \\
A B AB AB CD  \\
A B AB AC BD  \\
A B AC A D  \\
A B AC AB BD  \\
A B C A AD  \\
A B C C CD  \\
A B CD A A  \\
\end{minipage}
\hfill
\noindent
\begin{minipage}[t]{3cm}
\small
A B CD AB AB  \\
A B CD AC AC  \\
A B CD C C \\
A B CD CD CD \\
A BC A AB BD  \\
A BC B AB AD  \\
A BC B BC CD  \\
A BC BD A AB  \\
A BC BD BC C  \\
A BC BD BD D  \\
A BC D A A  \\
A BC D AB AB  \\
A BC D AD AD  \\
A BC D B B  \\
\end{minipage}
\hfill
\noindent
\begin{minipage}[t]{4cm}
\small
A BC D BC BC \\
A BC D BD BD  \\
A BC D D D  \\
AB AB AB AC AD\\
AB AC AB AB AD  \\
AB AC AB BC CD  \\
AB AC AD A A  \\
AB AC AD AB AB  \\
AB AC AD AD AD \\
AB AC AD B B  \\
AB AC AD BC BC  \\
AB AC AD BD BD  \\
AB AC AD D D  \\
AB CD AC AB BC  \\
\end{minipage}
\end{minipage}

\noindent \rule{14cm}{.4pt}

\subsection{The pattern sets of sums of Petersen fragments}\label{app-f0sums}
\begin{minipage}[t]{14cm}

\noindent \noindent The pattern set of $F_0+F_0$ (18 patterns):

\begin{minipage}[t]{3cm}
\small
A AB C AB BD  \\
A AB C AC CD  \\
A B AC AB BD \\
A B AC AC CD \\
A B C A AD  \\
A B C C CD \\
\end{minipage}
\hfill
\noindent
\begin{minipage}[t]{3cm}
\small
A BC A A D  \\
A BC A AB BD  \\
A BC B AB AD  \\
A BC B B D  \\
A BC B BC CD  \\
A BC D A A \\
\end{minipage}
\hfill
\noindent
\begin{minipage}[t]{4cm}
\small
A BC D AB AB \\
A BC D AD AD  \\
A BC D B B  \\
A BC D BC BC \\
A BC D BD BD \\
A BC D D D \\
\end{minipage}
\end{minipage}

\noindent \rule{14cm}{.4pt}

\noindent \begin{minipage}[h]{14cm}
\noindent The pattern set of $F_0+(F_0+F_0)$ (9 patterns): \\

\noindent \begin{minipage}[h]{5cm}
\small
A AB C AB BD \\
A AB C AC CD  \\
A B AC AB BD \\
A B AC AC CD  \\
A B C A AD  \\
A B C C CD  \\
A BC A AB BD  \\
A BC B AB AD  \\
A BC B BC CD \\
\end{minipage}
\end{minipage}

\

\

\noindent \rule{14cm}{.4pt}

\noindent \begin{minipage}[h]{14cm}
\noindent The pattern set of $(F_0+F_0)+F_0$ (25 patterns):\\

\noindent \begin{minipage}[t]{3cm}
\small
A AB A AC AD \\
A AB A BC BD \\
A AB C AB BD\\
A AB C AC CD  \\
A B AC AB BD  \\
A B AC AC CD  \\
A B C A AD  \\
A B C C CD  \\
A BC A A D  \\
\end{minipage}
\hfill
\noindent
\begin{minipage}[t]{3cm}
\small
A BC A AB BD  \\
A BC AD AB B \\
A BC AD AD D  \\
A BC B AB AD  \\
A BC B B D  \\
A BC B BC CD  \\
A BC BD A AB  \\
A BC BD BC C  \\
\end{minipage}
\hfill
\noindent
\begin{minipage}[t]{4cm}
\small
A BC BD BD D \\
A BC D A A \\
A BC D AB AB \\
A BC D AD AD  \\
A BC D B B  \\
A BC D BC BC  \\
A BC D BD BD  \\
A BC D D D  \\
\end{minipage}
\end{minipage}

\noindent \rule{14cm}{.4pt}

\noindent \begin{minipage}[h]{14cm}
\noindent The pattern set of $(F_0+F_0)+(F_0+F_0)$ (10 patterns):\\

\noindent \begin{minipage}[h]{5cm}
\small
A B AC AB BD \\
A B AC AC CD\\
A BC A AB BD\\
A BC AD AB B\\
A BC AD AD D\\
A BC B AB AD\\
A BC B BC CD\\
A BC BD A AB\\
A BC BD BC C\\
A BC BD BD D\\
\end{minipage}
\end{minipage}


\end{document}